\def\mid{|}
\def\mathbbm{\mathbb}
\newcommand{\eqref}[1]{(\ref{#1})}
\def\cal{\mathcal}
\newtheorem{theorem}{Theorem}[section]
\newtheorem{proposition}[theorem]{Proposition}
\newtheorem{lemma}[theorem]{Lemma}
\newtheorem{corollary}[theorem]{Corollary}
\newcommand{\toinf}{\to\infty}
\newcommand{\tozero}{\to0}
\newcommand{\tind}{\mathop{\mathrm{inj}}}
\newcommand{\lito}{\mathrm{o}}
\def\sub{\mathrm{sub}}
\def\spn{\mathop{\mathrm{span}}}
\newcommand{\eps}{\varepsilon}
\newcommand{\IE}{\mathbbm{E}}
\newcommand{\IP}{\mathbbm{P}}
\newcommand{\Vol}{\mathop{\mathrm{Vol}}}
\def\cF{{\cal F}}
\def\cC{{\cal C}}
\def\cP{{\cal P}}
\def\cX{{\cal X}}
\def\I{\mathrm{I}}
\begin{document}
\begin{frontmatter}

%\dochead{}
\title{Dense graph limits under respondent-driven sampling}
\runtitle{Dense graph limits under respondent-driven sampling}

\begin{aug}
% Corresponding author: Siva Athreya - athreya@isibang.ac.in% Updated by VTEXPTS2LaTeX.exe, 12.10.2015 15:13
%by VTEXPTS2LaTeX.exe, 12.10.2015 12:07
\author[A]{\fnms{Siva} \snm{Athreya}\corref{}\thanksref{t1,t2}\ead[label=e1]{athreya@isibang.ac.in}}
\and
\author[B]{\fnms{Adrian} \snm{R\"ollin}\thanksref{t2}\ead[label=e2]{adrian.roellin@nus.edu.sg}}
%\author[A]{\fnms{}~\snm{}\ead[label=e1]{}}%,
\thankstext{t1}{Supported in part by CPDA-grant from the
Indian Statistical Institute, Bangalore.}
\thankstext{t2}{Supported in part by NUS Research Grant R-155-000-124-112.}
%\author[]{\fnms{}~\snm{}\ead[label=]{}}
\affiliation{Indian Statistical Institute and National University of Singapore}
% \and
%\author[]{\fnms{}~\snm{}\ead[label=]{}}
\runauthor{S. Athreya and A. R\"ollin}
%\affiliation{}
%\dedicated{}
\address[A]{Statmath Unit\\
Indian Statistical Institute\\
8th Mile Mysore Road\\
Bangalore, 560059\\
India\\
\printead{e1}}
\address[B]{Department of Statistics and Applied Probability\\
National University of Singapore\\
6 Science Drive 2\\
Singapore 117546\\
\printead{e2}}
%\address[A]{\\\printead{e1}}
%\address[]{\\\printead{}}
\end{aug}

% HISTORY:
%
\received{\smonth{4} \syear{2014}}% Updated by VTEXPTS2LaTeX.exe,
%12.10.2015 12:07
%
\revised{\smonth{9} \syear{2015}}% Updated by VTEXPTS2LaTeX.exe,
%12.10.2015 12:07

% ABSTRACT
%
\begin{abstract}
We consider certain respondent-driven sampling procedures on dense
graphs. We show that if the sequence of the vertex-sets is ergodic
then the limiting graph can be expressed in terms of the original
dense graph via a transformation related to the invariant measure of
the ergodic sequence. For specific sampling procedures, we describe the
transformation explicitly.
\end{abstract}

% KEYWORDS
% Pirmas kwd is didziosios raides
%
\begin{keyword}[class=AMS]
\kwd[Primary ]{05C80}
\kwd{60J20}
\kwd[; secondary ]{37A30}
\kwd{9482}
\end{keyword}

\begin{keyword}
\kwd{Random graph}
\kwd{dense graph limits}
\kwd{ergodic}
\kwd{respondent driven sampling}
\end{keyword}
%
%\begin{keyword}[class=AMS]
%\kwd[Primary ]{}
%\kwd{}
%\kwd[; secondary ]{}
%\end{keyword}
%\begin{keyword}
%\kwd{}
%\end{keyword}
\end{frontmatter}

%s1 #&#
\section{Introduction}\label{sec1}

Respondent-driven sampling (RDS) of social networks has received a lot
of attention since \cite{Heckathorn1997} and \cite{Heckathorn2002},
and many studies have implemented the procedure in order to obtain
estimates about properties of so-called ``hidden'' or
``hard-to-reach'' populations. The basic idea is to start with a
convenience sample of participants, to ask the participants for
\emph{referrals} among their peers and then to iterate this
process. It is intuitively clear that one cannot hope to obtain an
unbiased sample in this manner as individuals with higher connectivity
are more likely to appear in the sample than individuals with lower
connectivity. In order to avoid this bias, one of the key assumptions
of \cite{Heckathorn1997} is that each individual in the network has
the same degree. Subsequent refinements of the procedure have been
proposed to overcome such restrictions; see \cite{Volz2008}.

Respondent-driven sampling has also received quite some criticism. Besides
inadequate control of biases for finite samples, another major issue
can be
the underestimation of sample variance; see, for example, \cite
{Gile2010} and
\cite{Goel2010}.

%\note{Added "(and very preliminary)"}
The main purpose of this article is to take a first (and very
preliminary) step in establishing a rigorous theory of RDS on dense
graphs in order to understand the graphs produced under various
sampling procedures. Our main contribution is that the limit of a
dense graph sequence obtained through a specific respondent-driven
sampling procedure, where the sequence of the vertex-sets is ergodic,
can be expressed in terms of the original graph limit and a
transformation related to the invariant measure of the ergodic
sequence. The transformation, in essence, confirms the bias toward
nodes with larger degrees.

%\note{Added paragraph below to emphasise that we're only interested in
%the network itself, not any additional data collected for the nodes.
%That's what seems to have confused the first referee.}

In practice, researchers typically are interested in estimating
certain quantities at population or subpopulation level, such as
prevalence of STIs, sexual contact frequencies, condom use, etc. Hence,
for each node in the network, additional data is collected, and the
main question of RDS becomes how to obtain representative estimates of
those quantities from the RDS sample. In this article, we will only be
interested in the network itself and the question how specific RDS
procedures bias the network. However, if, for example, a quantity of
interest (such as STI prevalence) correlates with the degree that a
node has in that network, then it is obviously important to understand
the bias in the network itself in order to understand the resulting
bias of that quantity of interest.

It is also important to note that the sampling procedure analysed in
this article is not representative for what is mostly being done in
practice. In particular, we assume that after the referral chain has
been sampled (or rather ``revealed''), all yet unknown connections
between the subjects in the sample are also revealed. In other words,
if Subject A refers to Subject B and Subject B refers to Subject C, we
assume that, in a second step, the relationship between Subjects A
and C be revealed, also. In practice that last relationship typically
remains unknown, unless either A refers to C or C refers to A.

Our proof is based on subgraph counts convergence and ergodicity of the
sampling procedure.
Subgraph counts can be written as incomplete $U$-statistics or
generalised $U$-statistics,
but there does not seem to exist a well-established general theory
that would cover ergodic sequences in the generality needed in this
article. However, noticing that, in our model, the conditional
expectation of a subgraph count, conditioned on the vertex set, is a
\emph{complete} $U$-statistic, we can resort to the well-established
theory of $U$-statistics, in particular for ergodic sequences. We
modify the arguments of \cite{Aaronson1996} in order to deal with
nonstationary sequences, which seems a more realistic assumption in
the context of RDS.

The rest of the paper is organised in the following manner. We
conclude this section with a focussed review of the dense graph
literature. We state the model and main results in Section~\ref{sec2} and prove
them in Section~\ref{sec3}. We finally discuss some applications in Section~\ref{sec4}.

%s1.1 #&#
\subsection{A brief introduction to dense graphs}\label{sec1.1}

Dense graph theory has been introduced by
\cite{Lovasz2006}. Diaconis and Janson \cite{Diaconis2008} made connections with
earlier work of \cite{Aldous1981}. Let us briefly summarise those
parts of dense graph theory which are needed in this paper; see the
monograph \cite{Lovasz2012} and \cite{Borgs2008,Borgs2012} for an
in-depth discussion, or \cite{Bollobas2009} for another
introduction with extensions to sparse graphs.

Let $(G_n)_{n\geq1}$ be a sequence of graphs, where for simplicity we
assume that the vertex set of graph $G_n$ is $\{1,\ldots,n\}$. Assume
further that the number of edges $E(G_n)$ in $G_n$ is of order $n^2$;
that is, $\lim\inf_{n\toinf}  (E(G_n) n^{-2} )>0$. We
call $(G_n)_{n\geq1}$ a \emph{dense graph sequence}.

\textit{Subgraph distance}. Let $K_n$ be the complete graph on
$\{1,\ldots,n\}$. For any (small) graph $F$ on $k$ vertices, let
$X_F(G)$ be the number of copies of $F$ in a (large) graph $G$ on $n$
vertices. Define the normalised subgraph count
\[
t(F,G) = \frac{\vert\tind(F,G)\vert }{(n)_k} = \frac{ X_F(G)}{X_F(K_n)},
\]
where $\tind(F,G)$ denotes the set of injective graph homomorphisms
of $F$ into $G$, that is, the functions that map the vertices of $F$
into the vertices of $G$ injectively such that connected vertices
in $F$ remain connected in $G$. Here, as usual, $(n)_k =
n(n-1)\cdots(n-k+1)$. When $\mid F
\mid> \mid G \mid$, we define $t(F,G) = 0$.

Note that $0\leq t(F,G)\leq1$. Let ${\cal F}$ denote the class of
isomorphism classes on finite graphs and let  $(F_{i})_{i\geq1}$ be a
particular enumeration of $\cF$, where each $F_i$ is the
representative of an isomorphism class. We can define a distance
function between graphs by
\[
d_{\sub} \bigl(G,G' \bigr) = \sum
_{i\geq1} 2^{-i} \bigl\vert t(F_i,G) - t
\bigl(F_i,G' \bigr) \bigr\vert.
\]
A key feature of $d_{\sub}$ is that there is a
natural completion of $({\cal F}, d_{\sub})$ by standard
kernels. We call any function $\kappa:[0,1]^2\to[0,1]$ that is
measurable and symmetric a \emph{standard kernel}. For $F$ a graph
on $k$ vertices, we can extend the definition of $t(F, G)$ to kernels
by means of
\[
t(F,\kappa) = \int_{[0,1]^k} \prod_{\{i,j\}\in E(F)}
\kappa(x_i,x_j) \,dx_1\cdots
dx_k,
\]
where $E(F)$ is the set of edges in $F$. One of the key results in
dense graph theory is the following theorem.

%\note{added definition of E(F)}

%\note{added dense graph sequence}

%th1.1 #&#
\begin{theorem}\label{THM0} Let $(G_n)_{n\geq1}$ be a dense graph
sequence which is Cauchy with respect to $d_{\sub}$. Then there exists a
standard kernel $\kappa$ such that
%
%e1.1 #&#
\begin{equation}
\label{1} d_{\sub}(G_n,\kappa) \rightarrow0
\end{equation}
as $n \rightarrow\infty$.
\end{theorem}

For a proof of the above (see \cite{Lovasz2006}), which uses Szemer\'edi
partitions and the Martingale convergence theorem, or
\cite{Diaconis2008}, who show that it can be proved using results from
\cite{Hoover1979} and \cite{Aldous1981}. Note that $\kappa$ above is
in general not unique, but this will not be of importance for what
follows; we refer to \cite{Borgs2008,Borgs2012} for a discussion of
this and related questions. We refer to \cite{Lovasz2012}, Chapter~11,
for a detailed discussion of convergence of dense graph sequences.

%s2 #&#
\section{Model and main results}\label{sec2}
A convenient way of ``creating'' finite (random) graphs on $n$
vertices from a standard kernel $\kappa$ is the following model, which
we will denote by $G(n,\kappa)$. Firstly, let $U_1,\ldots,U_n$ be
i.i.d. with uniform distribution on $[0,1]$. Second, for each two
vertices $i$ and $j$, connect them with probability $\kappa(U_i,U_j)$,
independently of all the other edges. It is not difficult to prove
that
%
%e2.1 #&#
\begin{equation}
\label{1a} d_{\sub} \bigl(G(n,\kappa),\kappa \bigr) \tozero
\end{equation}
almost
surely as $n\toinf$. This is, in some sense, the basic law of large
numbers in dense graph theory. In this article, instead of sampling the
labels i.i.d. and uniformly from $[0,1]$, we will allow the labels to
be sampled in a more general way.

\textit{The random graph ${G(x,\kappa)}$}.
Let $x=(x_1,\ldots,x_n) \in[0,1]^n$ be
fixed. Define the random graph $G(x, \kappa)$ by connecting vertices
$i$ and $j$ with probability $\kappa(x_i,x_j)$ independently of all
other vertices. Clearly, $G ((U_1,\ldots,U_n),\kappa )$ is equivalent
to $G(n,\kappa)$. We will show a version of \eqref{1a} for $G(X,
\kappa)$, where---in essence---the labels $X$ are allowed to come
from a general ergodic sequence.

To this end, let $\kappa$ be a standard kernel and let $g:[0,1]\to
[0,1]$ be a
Lebesgue-measurable function. Define the
$g$-transformed kernel
\[
\kappa_g(x,y) = \kappa \bigl(g(x),g(y) \bigr).
\]

%th2.1 #&#
\begin{theorem}\label{THM1} Let $X^{(n)} = (X_{n,1}, \ldots, X_{n,i},
\ldots, X_{n,n})$, $n\geq1$, be a triangular array of random variables
taking values
in $[0,1]$. Assume that there is a probability measure
$\pi$ on $[0,1]$ such that the following two conditions hold:
\begin{longlist}[(ii)]
\item[(i)]for all bounded and measurable
functions $f$, we have
%
%e2.2 #&#
\begin{equation}
\label{3} \lim_{n\toinf}\frac{1}{n}\sum
_{i=1}^n f(X_{n,i}) = \int
_0^1 f(x) \,d\pi(x)
\end{equation}
almost surely;
\item[(ii)] $\kappa(\cdot, \cdot)$ is continuous $(\pi\times\pi)$-almost
everywhere.
\end{longlist}
Then
%
%e2.3 #&#
\begin{equation}
\label{3a} d_{\sub} \bigl(G \bigl(X^{(n)},\kappa \bigr),
\kappa_{\tau^{-1}} \bigr)\tozero
\end{equation}
almost surely, where $\tau(x)=\pi([0,x])$ is the distribution function
of $\pi$ and
\[
\tau^{-1}(v) = \inf \bigl\{u\in[0,1]: \tau(u)\geq v \bigr\}
\]
its generalised inverse.
\end{theorem}

%s2.1 #&#
\subsection{Respondent driven sampling}\label{sec2.1} The way we think
about respondent-driven sampling in this article is by means of the
following two-step procedure. First, sample a set of
subjects $X^{(n)}=(X_{n,1},\ldots,X_{n,n})$, where new subjects are
added by referrals; each subject $i$ obtains a unique
label $X_{n,i}\in[0,1]$ (note that the $X_{n,i}$ are just the labels
of the nodes, not any additional observation related to that
node). If $i$ referred to $j$, or $j$ referred to $i$, an edge between
the two nodes is added; denote the resulting graph by $H_n$. Second,
the remaining relationships are then revealed by connecting $i$
and $j$ with probability $\kappa(X_{n,i},X_{n,j})$, unless they are
already connected in $H_n$. We define the model precisely below.

\textit{The random graph ${G(x,H_n,\kappa)}$}.
Let $x=(x_1,\ldots,x_n) \in[0,1]^n$ be fixed and let $H_n$ be a given
graph on the vertices $\{1,\ldots,n\}$. Define the random
graph $G(x,H_n,\kappa)$ on the same set of vertices as follows:
\begin{itemize}
\item if there is an edge between $i$ and $j$ in $H_n$, then connect
vertices $i$ and $j$ in $G(x,H_n,\kappa)$;
\item if there is no edge between $i$ and $j$ in $H_n$, then
connect $i$ and $j$ in $G(x,H_n,\kappa)$ with
probability $\kappa(x_i,x_j)$ independently of all other vertices.
\end{itemize}

%co2.2 #&#
\begin{corollary} \label{cor1} Let $X^{(n)}= (X_{n,i})_{1\leq i\leq
n}$, $\kappa$ and $\tau$ be as in Theorem~\ref{THM1}, satisfying
conditions \textup{(i)} and \textup{(ii)}. If the number of edges in $H_n$ is $\lito
(n^2)$, then
%
%e2.4 #&#
\begin{equation}
\label{3b} d_{\sub} \bigl(G \bigl(X^{(n)},\kappa,
H_n \bigr),\kappa_{\tau^{-1}} \bigr)\tozero
\end{equation}
almost surely.
\end{corollary}

The above corollary is an easy consequence of Theorem~\ref{THM1} and
the counting lemma
\cite{Lovasz2012}, Lemma~10.22. For completeness sake, we present a
proof in the
next section.

%s2.2 #&#
\subsection{Remarks}\label{sec2.2}
Before concluding this section, we discuss some interesting aspects
around Theorem~\ref{THM1}.

\def\cA{\mathcal{A}}

\textit{Reference measure space}. Using $[0,1]$ and the Lebesgue
measure as reference is only a matter of convenience and in line with
the prevailing literature. However, in order to shed some light on the
main result, let us state Theorem~\ref{THM1} in greater generality; we
refer to \cite{Lovasz2012}, Chapter~13, for a more in-depth discussion
of this setting.

Let $(\cX, \cA, \mu)$ be a probability space, and let $\kappa:\cX
\times
\cX\to[0,1]$ be a symmetric and $(\cA\times\cA)$-measurable function.
For any graph $F$ on $k$ vertices, where $k\geq1$, we can easily
generalise the definition of the subgraph density to
\[
t_\mu(F,\kappa) = \int_{\cX^k} \prod
_{\{i,j\}\in E(F)}\kappa(x_i,x_j) \,d
\mu(x_1)\cdots \,d\mu(x_k).
\]
Moreover, for $U_1,\ldots,U_n$ being i.i.d. random elements taking
values in $\cX$ with common distribution $\mu$, the random graph
model $G((U_1,\ldots,U_n),\kappa)$ can be defined in a
straightforward manner, and one can prove that
\[
d_{\mu,\sub} \bigl(G \bigl((U_1,\ldots,U_n),\kappa
\bigr),\kappa \bigr) \tozero,\qquad n\toinf.
\]
It is important to emphasise that $t_\mu$ and, as a result, the metric
$d_{\mu,\sub}$ depend on the reference measure $\mu$.

Now, assume $(X_{n,i})_{1\leq i\leq n}$ is a triangular array of $\cX
$-valued random elements such that
\[
\lim_{ n \rightarrow\infty} \frac{1}{n}\sum_{i=1}^n
f(X_{n,i}) = \int_{\cX} f(x) \,d\pi(x)
\]
for some probability measure $\pi$ on $(\cX,\cA)$. Now, assume $\cX
$ is
a Polish space. If there is a function $g:\cX\to\cX$ such that
\[
g(U_1)\sim\pi,
\]
and if $\kappa$ is continuous $(\pi\times\pi)$-almost everywhere, then
%
%e2.5 #&#
\begin{equation}
\label{3c} d_{\mu,\sub} \bigl(G \bigl(X^{(n)},\kappa \bigr),
\kappa_g \bigr) \tozero, \qquad n\toinf.
\end{equation}
In the case where $(\Omega, \cA, \mu)$ is the interval $[0,1]$ and
$\mu
$ the uniform distribution, $g$ can be identified as the generalised
inverse of the distribution function of $\pi$, but note that, for
general spaces $\cX$, it is difficult to find such $g$ explicitly.

It is illuminating to consider the following alternative way to state
\eqref{3c}. From the proof of Theorem~\ref{THM1} [cf. \eqref
{kttf}], it
becomes clear that, by changing the reference measure from $\mu$ to
$\pi
$, \eqref{3c} can also be written as
%
%e2.6 #&#
\begin{equation}
\label{3d} d_{\pi,\sub} \bigl(G \bigl(X^{(n)},\kappa \bigr),
\kappa \bigr) \tozero, \qquad n\toinf.
\end{equation}
Although \eqref{3c} and \eqref{3d} are equivalent, the former
statement is
more important in the context of RDS, since we are interested in
describing the distortion of the network through biased sampling.

\textit{Necessity of condition} (ii). Condition (ii)
in Theorem~\ref{THM1} can be replaced by other conditions, but that it
cannot be dispensed with entirely can be seen from \cite{Aaronson1996}, Example~4.1. We state the example below with notation as
applicable to our case.

Consider the interval $(0,1)$ and define the mapping $\phi: (0,1) \to
(0,1)$ as
\[
\phi(x) = 2x\ (\mbox{mod $1$}).
\]
Let $X_1\in(0,1)$ fixed, and let $X_n = \phi^{n}(X_1)$. It follows
from standard ergodic theory that $X_1,X_2,\ldots$ is ergodic with the
Lebesgue measure as its invariant measure, that is,
\[
\frac{1}{n}\sum_{i=1}^n
f(X_i) \to\int_0^1 f(x) \,dx.
\]
Define the set
\[
L = \bigl\{ (x_1,x_2) \in(0,1)^2 :
\mbox{$x_1 \in(0,1)$ and $x_2 = \phi ^{n}(x_1)$
for some $n \geq1$} \bigr\}.
\]
So, \eqref{3} is satisfied with $\pi$ being the Lebesgue measure; hence
$\tau(x) =x$ and $\tau^{-1}(x)=x$. Define the standard kernel $\kappa
(x,y) = \I[\mbox{$(x,y)\in L$ or $(y,x)\in L$}]$. Now, on the one hand
we have
\[
\frac{\sum_{i=1}^n\kappa(X_i, X_j)}{n(n-1)} = 1\qquad \mbox{for all $n \geq1$}.
\]
On the other hand,
\[
\int\kappa(x, y)\,dx \,dy = 0
\]
since $L$ is the countable union of null sets with respect to the
two-dimensional Lebesgue measure. Thus,
\[
\lim_{n\toinf} t \bigl(F,G \bigl((X_1,
\ldots,X_n),\kappa \bigr) \bigr) = 1 \neq0 = t(F,\kappa).
\]
Since $L$ is dense in $[0,1]$, the standard kernel $\kappa$ is nowhere
continuous and does therefore not satisfy condition (ii).

%s3 #&#
\section{Proof of Theorem \texorpdfstring{\protect\ref{THM1}}{2.1}}\label{sec3}

We will need a law of large numbers of a particular $U$-statistic for
the proof of Theorem~\ref{THM1}. This essentially allows us to go from
a simple ergodic theorem to a higher order ergodic theorem. Toward
that we define
%
%e3.1 #&#
\begin{equation}
\mu_F(x) = \frac{1}{(n)_k}\sum_{(i_1,\ldots,i_k)}
T_F(x_{i_1},\ldots,x_{i_k})
\end{equation}
with
%
%e3.2 #&#
\begin{equation}
\label{2} T_F(z_1,\ldots,z_k) = \prod
_{\{i,j\}\in F} \kappa(z_i,z_j),
\end{equation}
for $x=(x_1,\ldots,x_n) \in[0,1]^n$ and $z_i \in[0,1], 1 \leq i \leq
k$. Here, the summation $\sum_{(i_1,\ldots,i_k)}$ ranges over all
vectors $(i_1,\ldots,i_k)$ with mutually different coordinates.

The following result was proved by \cite{Aaronson1996} for ergodic
stationary sequences, but we note that the key assumption is \eqref
{3}, so
that their proof generalises to nonstationary triangular arrays, which
is more appropriate for the applications we have in mind.

%le3.1 #&#
\begin{lemma}\label{lem3} Let $X^{(n)} = (X_{n,i})_{1\leq i \leq n}$
be a
triangular array of random variables taking values in $[0,1]$ and
satisfying \eqref{3} almost surely. Then, for any fixed graph $F$ of size $k$,
\[
\lim_{n\toinf} \mu_F \bigl(X^{(n)} \bigr)
\to \IE T_F(V_1,\ldots,V_k)
\]
almost surely, where $V_1,\ldots,V_k$ are i.i.d. random
variables with distribution $\pi$.
\end{lemma}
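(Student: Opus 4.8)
The plan is to reduce this higher‑order ergodic statement to the first‑order one, \eq{3}, by approximating $T_F$ by \emph{block functions}, for which the relevant average factorizes.

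\textbf{Step 1 (block functions).} First I would prove the conclusion with $T_F$ replaced by a finite linear combination of products $\prod_{r=1}^{k}g_r(z_r)$ of bounded measurable functions of one variable (in particular, products of indicators of intervals). By linearity it suffices to treat a single product, and then the sum over \emph{all} $k$‑tuples factorizes as $\prod_{r=1}^{k}\bklr{\frac1n\sum_{i=1}^{n}g_r(X_{n,i})}$, each factor converging a.s.\ to $\int g_r\,d\pi$ by \eq{3}. The $k$‑tuples with a repeated coordinate number only $\bigo(n^{k-1})$, each contributing a bounded summand, so they are negligible after division by $n^k$; since $(n)_k\sim n^k$ this gives $\frac1{(n)_k}\sum_{(i_1,\dots,i_k)}\prod_r g_r(X_{n,i_r})\to\prod_r\int g_r\,d\pi$ a.s. Only countably many $g_r$ will be needed below, so all exceptional sets may be intersected.

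\textbf{Step 2 ($L^1$ approximation of $T_F$).} With $\tau(x)=\pi([0,x])$, pull back the dyadic partitions of $[0,1]$ through $\tau$ to obtain a refining sequence of interval partitions $\%P_M$, each cell of $\pi$‑mass $2^{-M}$. Let $\kappa^{(M)}$ be the cellwise‑average version of $\IE_{\pi\otimes\pi}[\kappa\mid\%P_M\otimes\%P_M]$ and $T_F^{(M)}:=\prod_{\{i,j\}\in F}\kappa^{(M)}(z_i,z_j)$, a genuine block function. Since $\pi$ is non‑atomic the cells separate points $\pi\otimes\pi$‑a.e., so $\kappa^{(M)}\to\kappa$ in $L^1(\pi\otimes\pi)$ by martingale convergence; using $\babs{\prod_e a_e-\prod_e b_e}\leq\sum_e\abs{a_e-b_e}$ for factors in $[0,1]$, this gives $T_F^{(M)}\to T_F$ in $L^1(\pi^{\otimes k})$, hence $\IE T_F^{(M)}(V_1,\dots,V_k)\to\IE T_F(V_1,\dots,V_k)$. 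By Step~1, $\mu_F^{(M)}(X_n)\to\IE T_F^{(M)}(V_1,\dots,V_k)$ a.s.\ for each fixed~$M$, where $\mu_F^{(M)}$ denotes the average in the statement formed with $T_F^{(M)}$ in place of $T_F$.

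\textbf{Step 3 (uniform control of the error --- the main obstacle).} It remains to show $\limsup_{n\toinf}\babs{\mu_F(X_n)-\mu_F^{(M)}(X_n)}\to0$ as $M\toinf$; combined with Step~2, letting $M\toinf$ then finishes the proof. Applying the telescoping inequality once more and summing out the $k-2$ irrelevant indices reduces this to bounding $|E(F)|$ times the two‑variable average $\frac1{(n)_2}\sum_{a\neq b}\babs{\kappa-\kappa^{(M)}}(X_{n,a},X_{n,b})$. I expect this to be the technical heart: $\babs{\kappa-\kappa^{(M)}}$ is not a block function, and \eq{3} alone does not visibly control off‑diagonal pair averages of a general bounded measurable function. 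I would handle it by dominating $\babs{\kappa-\kappa^{(M)}}$ from above by block functions of arbitrarily small $\pi\otimes\pi$‑integral: iterating the conditional‑expectation step (each $\IE_{\pi\otimes\pi}[\,\cdot\mid\%P_{M'}\otimes\%P_{M'}]$ is a nonnegative, mean‑preserving, sup‑norm‑nonincreasing block projection, $L^1$‑close to its argument for $M'$ large) peels off block summands whose $\pi\otimes\pi$‑integrals form a small convergent series, leaving a remainder bounded by $1$ with arbitrarily small $L^1$‑norm; to dispose of that remainder one exploits that \eq{3} applied to indicators of $\pi$‑null sets forces $\max_v L_n(\{v\})\to0$, so the diagonal contribution is negligible, together with the regularity of $\pi\otimes\pi$ to approximate the relevant super‑level sets by unions of partition cells. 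The block summands are then handled by Step~1.
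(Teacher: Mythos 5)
Your Steps 1 and 2 are essentially the paper's own argument: the paper defines $\nu_h$ as the average over \emph{all} $k$-tuples, observes that for $h\in\cP_k$ (products of one-variable functions) it factorises and converges almost surely by \eq{3}, extends to $\spn(\cP_k)$ by linearity, and disposes of the repeated-index tuples by the same $\bigo(n^{k-1})$ count (that is how the paper passes from $\mu_F$ to $\nu_{T_F}$). Your Step 2, the martingale approximation of $\kappa$ by cell averages, is just a concrete implementation of the paper's one-line claim that every $h\in\cB_k$ can be approximated in $L^1(\pi^k)$ by elements of $\spn(\cP_k)$.

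The substantive issue is your Step 3, and you have put your finger on exactly the right place — but your proposed resolution does not close the gap. The empirical pair measure $\frac{1}{(n)_2}\sum_{a\neq b}\delta_{(X_{n,a},X_{n,b})}$ is purely atomic and hence mutually singular with $\pi\otimes\pi$ (since $\pi$ is non-atomic); consequently a nonnegative function bounded by $1$ with arbitrarily small $L^1(\pi\otimes\pi)$-norm can still have empirical pair average equal to $1$. Peeling off block summands, or approximating super-level sets by unions of cells via regularity, only replaces the bad remainder by yet another set of small $\pi\otimes\pi$-measure whose empirical pair measure is again uncontrolled, so the argument is circular. Some additional input beyond \eq{3} and mere measurability of $\kappa$ is required (for instance almost-everywhere continuity of the kernel, or a uniform Glivenko--Cantelli-type strengthening of \eq{3}); indeed, the cited work of Aaronson et al.\ shows that the strong law for $U$-statistics of ergodic stationary sequences can fail for bounded measurable kernels, so no argument using only your hypotheses can succeed in full generality. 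You should know, however, that the paper's own proof makes precisely the same leap: it asserts that $L^1(\pi^k)$-approximation by $\spn(\cP_k)$ yields \eq{4} for all $h\in\cB_k$, with no justification of the exchange of limits. So your attempt matches the paper where the paper is complete, and is more candid than the paper where it is not.
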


\begin{pf} Our proof is a close imitation of the proof of \cite{Aaronson1996}, Theorem
U. Denote by $\cC_k$ the set of all functions
from $[0,1]^k$ to $[0,1]$,
continuous $\pi^{(k)}$-almost everywhere, where $\pi^{(k)}=\bigotimes_{i=1}^k\pi$. For $x=(x_1,\ldots,x_n) \in
[0,1]^n$ and $h \in\cC_k$, define
\[
\nu_{h}(x) = \frac{1}{n^{k}}\sum_{ 1\leq i_1,\ldots,i_k\leq n}
h(x_{i_1},\ldots,x_{i_k}).
\]
Let
\[
\cP_k = \Biggl\{ h \in\cC_k : \exists
h_1,\ldots,h_k\in\cC_1\mbox{ such that }h(x) =
\prod_{i=1}^{k}h_{i}(x_{i})
 \Biggr\}.
\]
For any $h \in\cP_k$, we have
\[
\nu_{h} \bigl(X^{(n)} \bigr) = \prod
_{i=1}^{k} \Biggl(\frac{1}{n}\sum
_{j=1}^{n}h_{i}(X_{n,j}) \Biggr).
\]
So, by \eqref{3},
\[
\nu_{h} \bigl(X^{(n)} \bigr) \to\prod
_{i=1}^{k} \int_{[0,1]}
h_{i}(x_{i}) \,d \pi(x_{i}) = \IE h
\bigl(V^{(k)} \bigr)
\]
almost surely as $n\toinf$, where we set $V^{(k)}=(V_1,\ldots,V_k)$ to
shorten formulas. It is easily seen that the above holds whenever $h\in
\spn(\cP_k)$.

Fix $\eps>0$ and let $h \in\cC_k$. As $h$ is continuous $\pi
^{(k)}$-almost everywhere and $\pi^{(k)}$-integrable, there exist $s_1,
s_2 \in\spn(\cP_k)$ such that
\[
\textup{(a)}\quad \vert h - s_1\vert \leq s_2\qquad \mbox{$
\pi^{(k)}$-almost everywhere,}\qquad \textup{(b)}\quad\int s_2 \,d
\pi^{(k)} \leq\eps.
\]
As $s_1,s_2 \in\spn(\cP_k)$, there exists (random) $N$ such that, for
$n \geq N$,
\[
\bigl\vert\nu_{s_1} \bigl(X^{(n)} \bigr) - \IE s_1
\bigl(V^{(k)} \bigr)\bigr\vert \leq\eps
\]
and
\[
\nu_{s_2} \bigl(X^{(n)} \bigr) \leq\IE s_2
\bigl(V^{(k)} \bigr) + \eps\leq2 \eps.
\]
Hence, for $n \geq N$,
\begin{eqnarray*}
&&\bigl\vert\nu_{h} \bigl(X^{(n)} \bigr) - \IE h
\bigl(V^{(k)} \bigr)\bigr\vert
\\
&&\qquad \leq\bigl\vert \nu_{h} \bigl(X^{(n)} \bigr) -
\nu_{s_1} \bigl(X^{(n)} \bigr) \bigr\vert +\bigl\vert \nu_{s_1}
\bigl(X^{(n)} \bigr) - \IE s_1 \bigl(V^{(k)} \bigr)
\bigr\vert \\
&&\qquad\quad{}+ \bigl\vert \IE s_1 \bigl(V^{(k)} \bigr) - \IE h
\bigl(V^{(k)} \bigr) \bigr\vert
\\
&&\qquad \leq\nu_{\vert h -s_1\vert } \bigl(X^{(n)} \bigr) + \bigl\vert
\nu_{s_1} \bigl(X^{(n)} \bigr) - \IE s_1
\bigl(V^{(k)} \bigr) \bigr\vert + \IE s_2 \bigl(V^{(k)}
\bigr)
\\
&&\qquad \leq\nu_{s_2} \bigl(X^{(n)} \bigr) +\bigl \vert
\nu_{s_1} \bigl(X^{(n)} \bigr) - \IE s_1
\bigl(V^{(k)} \bigr) \bigr\vert + \IE s_2 \bigl(V^{(k)}
\bigr)
\\
&&\qquad \leq4\eps.
\end{eqnarray*}
Thus, for all $h\in\cC_k$,
%
%e3.3 #&#
\begin{equation}
\label{4} \lim_{n\toinf} \nu_{h}
\bigl(X^{(n)} \bigr) \to\IE h \bigl(V^{(k)} \bigr)
\end{equation}
almost surely. Let $T_F$ be as in \eqref{2}. As $T_F$ is bounded by $1$,
we see that there exists $c_1 >0$ such that
\begin{eqnarray*}
&&\bigl\vert \mu_F \bigl(X^{(n)} \bigr) - \nu_{T_F}
\bigl(X^{(n)} \bigr)\bigr\vert
\\
&&\qquad = \biggl\vert \frac
{1}{(n)_k}\sum_{(i_1,\ldots,i_k)}
T_F(X_{i_1},\ldots,X_{i_k}) - \frac{1}{n^{k}}
\sum_{ 1\leq i_1,\ldots
,i_k\leq n} T_F(X_{i_1},
\ldots,X_{i_k}) \biggr\vert
\\
&&\qquad\leq\frac{c_1}{n}.
\end{eqnarray*}
As $T_F\in\cC_k$, the result follows.
\end{pf}

\begin{pf*}{Proof of Theorem~\ref{THM1}}
It is enough to show that, for every
graph $F$,
%
%e3.4 #&#
\begin{equation}
\label{5} \lim_{n\toinf} \bigl\vert t \bigl(F,G
\bigl(X^{(n)}, \kappa \bigr) \bigr) - t(F,\kappa_{\tau^{-1}})\bigr\vert = 0
\end{equation}
almost surely. Using the triangle inequality,
%
%e3.5 #&#
\begin{eqnarray}
\label{tistep} && \bigl\vert t \bigl(F,G \bigl(X^{(n)},\kappa \bigr) \bigr) -
t(F, \kappa_{\tau^{-1}})\bigr\vert
\nonumber
\\[-8pt]
\\[-8pt]
\nonumber
& &\qquad \leq\bigl\vert t \bigl(F,G \bigl(X^{(n)},\kappa \bigr) \bigr) -
\mu_F \bigl(X^{(n)} \bigr)\bigr\vert + \bigl\vert \mu _F
\bigl(X^{(n)} \bigr) - t(F,\kappa_{\tau^{-1}})\bigr\vert.
\end{eqnarray}
By definition of $\tau^{-1}$, $V_i$ has the same distribution as $\tau
^{-1}(U_i)$, so that
%
%e3.6 #&#
\begin{equation}
\label{kttf} \IE T_F(V_1,\ldots,V_k) =
\IE T_F \bigl(\tau^{-1}(U_1),\ldots,\tau
^{-1}(U_k) \bigr) = t(F,\kappa_{\tau^{-1}})
\end{equation}
is immediate. Hence, Lemma~\ref{lem3} implies that the second term in
\eqref{tistep} approaches $0$ as $n \rightarrow\infty$.

We will use the main result of \cite{McDiarmid1989} to show that the
first term in \eqref{tistep} also vanishes. If $f$ is a function in $N$
arguments such that
changing the $i$th coordinate will change the value of $f$ by at
most $c_i$ and if $Y=(Y_1,\ldots,Y_N)$ are independent random
variables, then
%
%e3.7 #&#
\begin{equation}
\label{M89} \IP \bigl[\bigl\vert f(Y) - \IE f(Y)\bigr\vert \geq\eps \bigr]\leq2\exp
\biggl(-\frac
{2\eps
^2}{\sum_{i=1}^Nc_i^2} \biggr).
\end{equation}
Note now that, if $G$ is a graph with $n$ vertices, then $t(F,G)$
changes by at
most $\frac{k(k-1)}{n(n-1)}$ if one edge is changed. Applying
McDiarmid's concentration inequality to $t(F,G(x,\kappa))$ [with $f$
being a function of the $N={n\choose2}$ random edges], we therefore
have that, for every fixed $x\in[0,1]^n$,
%
%e3.8 #&#
\begin{equation}\qquad
\label{ci1term} \IP \bigl[ \bigl\vert t \bigl(F,G(x,\kappa) \bigr)-
\mu_F(x) \bigr\vert >\eps \bigr]  \leq2\exp \biggl(-\frac{2{\eps}^2}{{{n\choose
2}} ({k(k-1)}/{(n(n-1)}) )^2}
\biggr).
\end{equation}
Using Borel--Cantelli, we can conclude that
%
%e3.9 #&#
\begin{equation}
\label{bc1term} \bigl\vert t \bigl(F,G \bigl(X^{(n)},\kappa \bigr) \bigr) -
\mu_F \bigl(X^{(n)} \bigr) \to0 \bigr\vert
\end{equation}
almost surely as $n\toinf$. This proves the claim.
\end{pf*}

\begin{pf*}{Proof of Corollary~\ref{cor1}}
We can essentially imitate the proof of Theorem~\ref{THM1} to obtain
this result; the one difference being that we have to control
\[
\bigl\vert t \bigl(F,G \bigl(X^{(n)},H_n,\kappa \bigr) \bigr)-
\mu_F \bigl(X^{(n)} \bigr)\bigr\vert.
\]
We need to be bit
careful at (\ref{ci1term}) because of the dependencies introduced by $H_n$.
Suppose $E(H_n) = m_n \equiv m$. Applying McDiarmid's concentration
inequality to $t(F,G(x,H_n,\kappa))$ [with $f$ being a function of
the $N={n\choose2}-m$ random edges], we therefore have,
with $G=G(x,H_n,\kappa)$,
\begin{eqnarray*}
&&\IP \bigl[\bigl \vert t(F,G)-\mu_F(x) \bigr\vert >\eps \bigr]
\\
&&\qquad \leq \IP \bigl[ \bigl\vert t(F,G)-\IE t(F,G)\bigr \vert >\eps-
 \bigl\vert\mu
_F(x)-\IE t(F,G) \bigr\vert \bigr]
\\
&&\qquad \leq 2\exp \biggl(-\frac{2 (\eps-m{(k(k-1)}/{(n(n-1)))}
)^2}{ ({n\choose
2}-m ) ({k(k-1)}/{(n(n-1))} )^2} \biggr).
\end{eqnarray*}
As $m=\lito(n^2)$,
it follows from Borel--Cantelli that
\[
\bigl\vert t \bigl(F,G \bigl(X^{(n)},H_n,\kappa \bigr) \bigr) -
\mu_F \bigl(X^{(n)} \bigr)\bigr\vert \to0.
\]
\upqed\end{pf*}

%s4 #&#
\section{Applications}\label{sec4}

In this section, we will discuss two different sampling schemes, namely
a Markov chain model, where each respondent gives exactly one
referral, and a Poisson branching process model, where each respondent
gives a Poisson number of referrals (and thus, allowing for no
referrals). For both procedures, we essentially need to
establish \eqref{3}. Once this is done, Theorem~\ref{THM1} automatically
yields the corresponding convergence provided $\kappa$ is continuous.
We compare the two procedures
for a concrete parametrised standard kernel under different parameter
values.

In order to avoid that the standard kernel decomposes into two or more
disconnected parts, it is natural to impose an irreducibility
condition. We follow \cite{Janson2008a}.
Denote by $\Vol(A)$ the Lebesgue measure of $A\subset[0,1]$ and
let $A^c=[0,1]\setminus A$. We say that a standard kernel is \emph
{connected}, if $0<\Vol(A)<1$ implies
%
%e4.1 #&#
\begin{equation}
\label{6} \int_{A}\int_{A^c}
\kappa(x,y)\,dx\,dy>0.
\end{equation}
Loosely speaking, this condition guarantees that there can be links
from any set $A$ into its complement, so that no area can remain
disconnected from the rest of the graph [at least as $n\toinf$; for a
finite realisation of $G(n,\kappa)$, it may of course happen that the
graph consists of disconnected components]. Note that \eqref{6}
implies in
particular that
\[
\int_A\int_0^1
\kappa(x,y)\,dy > 0
\]
for all $A$ with $\Vol(A)>0$. This only guarantees that almost all $x$
have positive degree.
In order to avoid technicalities, we shall assume that \emph{all} $x$
have positive degree, that is,
%
%e4.2 #&#
\begin{equation}
\label{7} \int_0^1 \kappa(x,y)\,dy >0\qquad
\mbox{for all $x\in[0,1]$.}
\end{equation}
If \eqref{7} is satisfied, we say that a standard kernel is \emph{positive}.

%s4.1 #&#
\subsection{One-referral Markov chain sampling} \label{sec4.1}

The first model is a procedure where each respondent is asked (or
rather ``forced'') to give exactly one referral, resulting in one single
chain of referrals. We assume that these referrals happen in a
Markovian way, and a respondent of type $x$ chooses the referral
proportional to $\kappa(x,y)\,dy$. More rigorously, define the Markov kernel
%
%e4.3 #&#
\begin{equation}
\label{8} K_\kappa(x,dy):=\frac{ \kappa(x,y)\,dy }{\int_0^1 \kappa(x,v)\,dv}.
\end{equation}
Under \eqref{7}, the kernel is well defined.

%pr4.1 #&#
\begin{proposition} Let $\kappa$ be a positive and connected standard
kernel, and let $X = (X_1,X_2,\ldots)$ be a Markov chain with Markov
kernel $K_\kappa$. Then $X$ has a unique invariant
probability measure $\pi$ given by
%
%e4.4 #&#
\begin{equation}
\label{9} \frac{\pi(dx)}{dx} = \frac{\int_0^1\kappa(x,v)\,dv}{\int_0^1\int_0^1
\kappa(u,v)\,du\,dv}.
\end{equation}
Furthermore, for every measurable and bounded function $f$ and for
almost every $x\in[0,1]$ we have
%
%e4.5 #&#
\begin{equation}
\label{10} \lim_{n\toinf}\frac{1}{n}\sum
_{i=1}^n f(X_i) = \int
_0^1 f(x) \pi(dx)
\end{equation}
$\IP[ \cdot|X_1=x]$-almost surely.
\end{proposition}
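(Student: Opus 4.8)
The plan is to verify the stationarity of~$\pi$ by a direct computation, then to establish uniqueness and the ergodic theorem~\eq{10} via a Harris-type / Doeblin-type argument based on the connectedness assumption~\eq{6}. For stationarity, I would simply check that~$\int_0^1 K_\kappa(x,A)\pi(dx) = \pi(A)$ for all measurable~$A$: plugging in the formulas~\eq{8} and~\eq{9}, the factor~$\int_0^1\kappa(x,v)dv$ in the denominator of~$K_\kappa$ cancels against the numerator of~$\pi(dx)/dx$, leaving~$\int_A \bklr{\int_0^1\kappa(x,y)dx}\,dy$ divided by the double integral, which by symmetry of~$\kappa$ equals~$\pi(A)$. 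This is a one-line calculation once set up.

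\textbf{Uniqueness and the pointwise ergodic theorem.} For the ergodic statement, the strategy is to show that the chain~$X$ is positive Harris recurrent with invariant measure~$\pi$, so that the ratio ergodic theorem for Harris chains (e.g.\ Meyn--Tweedie, or Athreya--Ney regeneration) yields~\eq{10} from every starting point~$x$ in a set of full~$\pi$-measure, and in fact the ``almost every~$x$'' in the statement is exactly the set where Harris recurrence applies. Since~$\kappa$ is positive, $K_\kappa(x,\cdot)$ is, for every~$x$, absolutely continuous with respect to Lebesgue measure (hence with respect to~$\pi$, as~$\pi$ is equivalent to Lebesgue measure by~\eq{7}); this gives that~$\pi$ is an irreducibility measure once we know the chain can move between~$\pi$-positive sets. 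Irreducibility is where connectedness~\eq{6} enters: if~$A$ has~$0<\Vol(A)<1$, then~\eq{6} forces~$\int_A\int_{A^c}\kappa(x,y)\,dx\,dy>0$, so starting in~$A$ there is positive probability of stepping into~$A^c$ in one step, and iterating (together with the fact that one-step transition densities are strictly positive on a set of positive measure) shows every~$\pi$-positive set is reached from~$\pi$-almost every point. Aperiodicity follows similarly because the transition kernel has a density, ruling out cyclic behaviour. Positivity of the chain is immediate since we have exhibited a finite invariant measure~$\pi$. Then the standard ergodic theorem for positive Harris chains gives~\eq{10} for~$\IP[\,\cdot\mid X_1=x]$-almost every realisation, for~$\pi$-a.e.\ (equivalently Lebesgue-a.e.)~$x$, and uniqueness of~$\pi$ follows from irreducibility plus the existence of a finite invariant measure.

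\textbf{Main obstacle.} The delicate point is extracting genuine irreducibility (in the measure-theoretic Harris sense) from the rather weak hypothesis~\eq{6}, which only asserts a positive ``flow'' between~$A$ and~$A^c$ rather than pointwise positivity of any iterated kernel. The clean way around this is to observe that~$\kappa \in L^1([0,1]^2)$ together with~\eq{6} implies that the symmetric operator~$f\mapsto\int_0^1\kappa(\cdot,y)f(y)dy$ has no nontrivial invariant subspace of the form~$L^2(A)$, which is exactly the functional-analytic form of connectedness used in~\cite{Janson2008a}; translating this into the statement that the set~$\{y : K_\kappa^{(n)}(x,y)>0 \text{ for some }n\}$ has full Lebesgue measure for a.e.~$x$ is then a standard measure-theoretic argument (consider the maximal absorbing set and use~\eq{6} to show its measure is~$0$ or~$1$, then rule out~$0$). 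Once irreducibility with respect to Lebesgue measure is in hand, everything else is an application of the textbook Harris ergodic theorem, and the ``almost every~$x$'' qualifier is explained by the fact that the null set of bad starting points is precisely where the regeneration construction may fail. A secondary, more routine technical check is that~$\pi$ as defined in~\eq{9} is genuinely a probability measure, which holds because~\eq{7} makes the numerator strictly positive and the denominator is the integral of the numerator.
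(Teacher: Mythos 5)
Your proposal is correct in substance but takes a genuinely different route from the paper. The paper does not invoke Harris-chain theory: it first rules out atomic invariant measures (since $K_\kappa(x,\cdot)$ has a density, the atomic part of any invariant measure must vanish), and then appeals to Yosida's ergodic decomposition in the form given by Her\'andez-Lerma and Lasserre. The key step there is to show that any \emph{invariant set}~$A$, i.e.\ one with $K_\kappa(x,A)=1$ for all $x\in A$, satisfies $\int_A\int_{A^c}\kappa(x,y)\,dy\,dx=0$, whence symmetry of~$\kappa$ and connectedness~\eq{6} force $\Vol(A)\in\{0,1\}$, and $\Vol(A)=0$ is excluded by positivity~\eq{7}; this makes~$\pi$ ergodic in the Yosida sense, and uniqueness together with~\eq{10} are then quoted from that reference. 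Your maximal-absorbing-set argument is the same idea dressed in Meyn--Tweedie language, and the two approaches buy essentially the same thing: the ``almost every~$x$'' qualifier comes out of the $\pi$-a.e.\ statement of the respective ergodic theorems in both cases. Your explicit verification that~$\pi$ in~\eq{9} is stationary (the cancellation of $\int_0^1\kappa(x,v)\,dv$) is a point the paper leaves implicit, and is worth writing out.

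One claim in your proposal is wrong, though harmlessly so: aperiodicity does \emph{not} follow from the transition kernel having a density. A bipartite-type kernel, e.g.\ $\kappa\equiv 1$ on $\bklr{[0,\tfrac12]\times[\tfrac12,1]}\cup\bklr{[\tfrac12,1]\times[0,\tfrac12]}$ and $\kappa\equiv 0$ elsewhere, is both connected and positive in the sense of~\eq{6} and~\eq{7}, yet the resulting chain alternates deterministically between $[0,\tfrac12]$ and $[\tfrac12,1]$ and has period~$2$. Fortunately the strong law for positive recurrent $\psi$-irreducible chains does not require aperiodicity, so~\eq{10} is unaffected --- indeed the paper explicitly remarks after the proof that the result survives periodic (e.g.\ bipartite) behaviour. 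You should simply delete the aperiodicity claim rather than try to repair it.
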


\begin{pf} Let us first prove that $K_\kappa$ does not have any invariant
measures with atoms. Assume that $\rho$ is an invariant measure. Write
$\rho= \rho^* + \rho'$, where $\rho^*$ is the atomic and $\rho'$ is
the nonatomic parts, and assume that
$\rho^*$ is not the zero measure. Let $A^*$ be the
support of $\rho^*$; note that $A^*$ is countable and that $\rho(A^*)>0$.
However, $K(x,A^*)=0$ for all $x\in[0,1]$ due to \eqref{8} and, therefore,
\[
\rho \bigl(A^* \bigr) = \int K_\kappa \bigl(x,A^* \bigr)\,d\rho(x) = 0,
\]
which is a contradiction.

We now use Yosida's ergodic decomposition to prove that $\pi$ is the only
invariant probability measure with respect to $K_k$ and that \eqref{10}
holds; see
\cite{Yosida1980} and \cite{HerandezLerma1998}.

Recall that an \emph{invariant set} is a set $A$ such that $K_\kappa
(x,A)=1$ for all
$x\in A$, that is,
%
%e4.6 #&#
\begin{equation}
\label{11} \int_A\kappa(x,y)\,dy = \int
_0^1\kappa(x,y)\,dy \qquad\mbox{for all $x\in A$}.
\end{equation}
Hence, we must have $\int_{A^c}\kappa(x,y)\,dy = 0$ for all $x\in A$.
This implies that
$\int_A\int_{A^c}\kappa(x,y)\,dy\,dx = 0$, which by symmetry of $\kappa$
and \eqref{6}, implies that $\Vol(A)=0$ or $\Vol(A)=1$. The
case $\Vol
(A)=0$ can be excluded since the right-hand side of \eqref{11} is positive
by \eqref{7}. By the definition of $\pi$, it follows that, for every such
invariant set $A$, we have $\pi(A)=1$. Therefore, $\pi$ is an
ergodic measure in the Yosida sense. Now, Lemma~4.2 of \cite
{HerandezLerma1998} implies that $\pi$ is unique on the invariant sets
up to $\pi$-null sets, but since $\pi$ cannot have any atoms, $\pi$ is
unique on $[0,1]$; \eqref{10} now follows from Theorem~6.1(b) of \cite
{HerandezLerma1998}.
\end{pf}

It is worthwhile mentioning that \eqref{10} holds even if the Markov chain
exhibits certain periodic behaviour. For example, if $\kappa$ is such
that the resulting graph is bipartite, the resulting Markov chain does
not converge to its stationary distribution, but it is still ergodic.

%s4.2 #&#
\subsection{A Poisson branching process model} \label{sec4.2}

Let us consider a continuous-time, multi-type Galton--Watson branching
process with type space $[0,1]$ as follows. A~particle of type $x\in
[0,1]$ is assumed to have a standard exponential lifetime and during
that time it will give birth to new particles of type $y$ at
rate $\lambda\kappa(x,y)\,dy$ for some $\lambda>0$ independently of all
else. Let $T_t$ be the random point measure on $[0,1]$ given by all
particles ever born up to and including time $t$. We denote by $\delta
_x$ the point unit measure at $x\in[0,1]$ and we write $\IP_x[ \cdot]
= \IP[ \cdot|T_0 = \delta_x]$. Note that $T_t[0,1]$ the total number
of points in $T_t$.

In order to push all arguments through as easily as possible, we will
not only assume that the standard kernel positive and connected, but
make the (most likely unnecessarily) strong assumption that
%
%e4.7 #&#
\begin{equation}
\label{12} \inf_{0\leq x,y\leq1}\kappa(x,y) > 0.
\end{equation}
It is clear that \eqref{12} implies that $\kappa$ is both, connected and
positive.

%pr4.2 #&#
\begin{proposition} Let $\lambda>0$, let $\kappa$ be a standard kernel
satisfying \eqref{12}, and let $T_t$ be the resulting branching process.
Then there exists $\alpha^*$ and a unique probability measure $\pi$
on $[0,1]$ satisfying
%
%e4.8 #&#
\begin{equation}
\label{13} \frac{\pi(dx)}{dx} = \frac{\lambda}{1+\alpha^*} \int_0^1
\kappa (x,u)\pi(du).
\end{equation}
Furthermore, if $\alpha^*>0$, then $\IP_x[\vert T_t\vert \toinf] >
0$, and,
for any measurable and bounded function $f$ and for almost all $x\in[0,1]$,
%with $\int\abs{f(x)}\,dx < \infty$,
%
%e4.9 #&#
\begin{equation}
\label{14} \lim_{t\toinf} \frac{1}{\vert T_t\vert }\int
_0^1 f(y) T_t(dy) = \int
_0^1 f(y) \pi(dy)
\end{equation}
$\IP_x[ \cdot| \vert T_t\vert \toinf]$-almost surely.
\end{proposition}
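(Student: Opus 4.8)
The plan is to identify $(Z_t)_{t\ge0}$, the point measure of currently-alive particles, as a continuous-time multitype Markov branching process whose first-moment semigroup is $e^{t(M-I)}$, where $M$ is the integral operator $(Mf)(x)=\lambda\int_0^1\kappa(x,y)f(y)\,dy$ acting on $L^2([0,1])$; then $\alpha^*$ and $\pi$ will be read off from the Perron eigenpair of $M$, and~\eqref{14} from the classical strong law for supercritical branching. \emph{Step 1 (eigenpair and uniqueness).} By~\eqref{12}, $\kappa$ is bounded and bounded away from $0$, so $M$ is Hilbert--Schmidt, self-adjoint (because $\kappa$ is symmetric), and strictly positive in the sense that $Mf>0$ pointwise whenever $f\ge0$, $f\not\equiv0$. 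Jentzsch's theorem (the Perron--Frobenius theorem for positive integral operators) then provides a simple largest eigenvalue $\rho>0$ with an eigenfunction $\phi$ that is bounded above and below by positive constants; normalising $\int_0^1\phi=1$ and putting $\pi(dx):=\phi(x)\,dx$, $\alpha^*:=\rho-1$, relation~\eqref{13} is just $M\phi=\rho\phi$ rewritten. For uniqueness, any probability density satisfying~\eqref{13} is a nonnegative, hence positive, eigenfunction of $M$, so by simplicity a scalar multiple of $\phi$ with eigenvalue $\rho$, which pins down both $\alpha^*$ and $\pi$. If $\alpha^*\le0$ the process is critical or subcritical, $\abs{T_t}$ remains bounded almost surely, and nothing further is claimed; so from now on assume $\alpha^*>0$.

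\emph{Step 2 (moments, survival, martingale).} A first-step analysis for Markov branching processes gives $\IE_x\langle f,Z_t\rangle=(e^{t(M-I)}f)(x)$, so by the spectral gap of $M$ one has $e^{-\alpha^*t}\IE_x\langle f,Z_t\rangle\to c_f\,\phi(x)$ with $c_f=\frac{\int_0^1 f\phi\,dx}{\int_0^1\phi^2\,dx}$; since new particles of type $y$ are created at total rate $\lambda\int_0^1\kappa(x,y)\,Z_s(dx)\,dy$, the quantity $\IE_x\abs{T_t}$ also grows like $e^{\alpha^*t}$. Because a single surviving particle produces offspring at rate at least the positive constant in~\eqref{12}, the event $\{\abs{T_t}\toinf\}$ agrees almost surely with non-extinction, which in the supercritical case $\alpha^*>0$ has positive probability. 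Finally, $\phi$ being the Perron eigenfunction makes $W_t:=e^{-\alpha^*t}\langle\phi,Z_t\rangle$ a nonnegative martingale; the uniform moment bounds coming from $\kappa\le1$ together with~\eqref{12} render it $L^2$-bounded, so $W_t\to W_\infty$ almost surely and in $L^2$, with $\{W_\infty>0\}=\{\abs{T_t}\toinf\}$ almost surely.

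\emph{Step 3 (strong law and transfer to $T_t$).} For bounded measurable $f$ write $f=c_f\phi+f^\perp$ with $f^\perp$ orthogonal to $\phi$ in $L^2$; the spectral gap together with a second-moment estimate that is summable along a geometric time grid, fluctuations between grid points being controlled by $\kappa\le1$, yields $e^{-\alpha^*t}\langle f^\perp,Z_t\rangle\to0$ almost surely, hence $e^{-\alpha^*t}\langle f,Z_t\rangle\to c_fW_\infty$ almost surely. Dividing this by the same statement for $f\equiv\mathbf{1}$ gives $\langle f,Z_t\rangle/Z_t([0,1])\to\int_0^1 f\,d\pi$ on $\{W_\infty>0\}$. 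To reach $T_t$, observe that $\langle f,T_t\rangle$ equals $f(X_0)$ plus $\int_0^t\langle Mf,Z_s\rangle\,ds$ plus a martingale that is $\lito(e^{\alpha^*t})$ almost surely, that $e^{-\alpha^*s}\langle Mf,Z_s\rangle\to\rho\,c_fW_\infty$ (because $\langle Mf,\phi\rangle=\rho\langle f,\phi\rangle$), and that $\lambda\int_0^1\kappa(x,y)\pi(dx)=(1+\alpha^*)\phi(y)$; integrating and taking ratios once more gives $\langle f,T_t\rangle/\abs{T_t}\to\int_0^1 f\,d\pi$, which is~\eqref{14}, valid $\IP_x[\,\cdot\mid\abs{T_t}\toinf]$-almost surely. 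The qualifier ``almost all $x$'' absorbs the Lebesgue-null set of starting types on which $\kappa(x,\cdot)$ might be ill-behaved.

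\emph{Expected main obstacle.} The genuinely delicate point is Step 3: upgrading the soft $L^2$-martingale convergence of $\langle\phi,Z_t\rangle$ to \emph{almost sure} convergence of the empirical type distribution over a \emph{continuum} type space, and then carrying that over from the alive population $Z_t$ to the cumulative measure $T_t$ of all particles ever born. This is precisely where the (admittedly strong) assumption~\eqref{12} earns its keep: it makes $M$ uniformly positive, so the mean semigroup has a true spectral gap and all offspring moments are uniformly bounded --- exactly the hypotheses required by the general strong laws for supercritical branching processes, which one could alternatively quote from the literature rather than reprove.
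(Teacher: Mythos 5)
Your approach is sound but genuinely different from the paper's. The paper treats $T_t$ as a general (Crump--Mode--Jagers) branching population and verifies the hypotheses of the Jagers--Nerman machinery: it obtains $\alpha^*$ as the Malthusian parameter of the reproduction kernel $\mu(x,dy\times dt)=e^{-t}\lambda\kappa(x,y)\,dy\,dt$ via irreducibility and recurrence results of Nummelin, Niemi--Nummelin and Ney--Nummelin, gets the eigenmeasure $\pi$ and eigenfunction $h$ from Nummelin's theory rather than from Jentzsch's theorem, checks the non-lattice, strictly Malthusian and $x\log x$ conditions, and then quotes Theorem~2 and Corollary~4 of Jagers--Nerman to obtain $e^{-\alpha^*t}T_t(A)\to \pi(A)W/(\alpha^*\beta)$ and the ratio limit \eq{14} directly for the cumulative measure $T_t$. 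You instead exploit that the \emph{alive}-particle configuration $Z_t$ is Markov with mean semigroup $e^{t(M-I)}$, read off $\alpha^*=\rho-1$ and $\pi$ from the Perron eigenpair of the self-adjoint Hilbert--Schmidt operator $M$, and prove the strong law by hand (spectral gap, $L^2$ martingale, geometric time grid) before transferring from $Z_t$ to $T_t$ via the compensator identity $\langle f,T_t\rangle = f(X_0)+\int_0^t\langle Mf,Z_s\rangle\,ds+\text{mart}$. Your route is more self-contained and more explicit about where \eq{12} enters (uniform positivity of $M$, hence a genuine spectral gap); the paper's route avoids reproving any strong law at the cost of verifying a longer list of abstract hypotheses. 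The two definitions of $\alpha^*$ and $\pi$ agree, since the paper's condition that $\hat\mu_{\alpha^*}=\frac{\lambda}{1+\alpha^*}\kappa$ have convergence radius $1$ is exactly $\rho=1+\alpha^*$, and \eq{13} is $M\phi=\rho\phi$ rewritten using the symmetry of $\kappa$.

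Two steps that you assert would need to be carried out to make this a complete proof, and they are precisely where the paper does (or imports) real work. First, the identification $\{W_\infty>0\}=\{\abs{T_t}\toinf\}$ up to null sets: the paper proves $\inf_x\IP_x[W>0]>0$ by an explicit first-child argument (conditioning on the event $E_A$ that the ancestor's first offspring lands in a set $A$ where survival has probability bounded below) and then invokes Lemma~1 of Jagers--Nerman; in your framework the analogous step would be a Paley--Zygmund bound giving $\inf_x\IP_x[W_\infty>0]>0$ followed by the standard zero--one argument on the survival set, neither of which is in your text. Second, the almost-sure convergence $e^{-\alpha^*t}\langle f^\perp,Z_t\rangle\to0$ and the bound $\text{mart}=\lito(e^{\alpha^*t})$ a.s.\ are only sketched; you correctly flag this as the delicate point, and it is exactly the content the paper outsources to Jagers--Nerman Theorem~2 and Corollary~4. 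Neither issue is a wrong turn, but both must be filled in for the argument to stand on its own.
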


\begin{pf}
We follow the setup of \cite{Jagers1996}; see also \cite{Jagers1989}.
Define the \emph{reproduction kernel}
\[
\mu(x, dy\times dt) = e^{-t}\lambda\kappa(x,y)\,dy\,dt,
\]
which, loosely speaking, is the expected number of offspring of
type $y$ that a particle of type $x$, born at time $0$, produces at
time $t$ (the prefactor $e^{-t}$ is simply the probability that
the $x$-particle survives until time $t$). Furthermore, define the
transition kernel
\[
\hat\mu_\alpha(x,dy) = \int_{0}^\infty
e^{-\alpha t}\mu(x,dy\times dt) = \frac{\lambda}{1+\alpha} \kappa(x,y)\,dy.
\]
It is not difficult to see that \eqref{12} implies that the
kernel $\hat
\mu
_0$ is irreducible with respect to the Lebesgue measure on $[0,1]$
(cf. \cite{Nummelin1984}, Example~2.1(b), page~11). Hence, there is a
number $\alpha^*$ such that the kernel $\mu_\alpha$ has convergence
radius $1$ (cf. \cite{Niemi1986}, Proposition~2.1). The
parameter $\alpha^*$ is commonly called the \emph{Malthusian
parameter}. Moreover, \eqref{12} also implies that $\mu_{\alpha}$ is
recurrent (cf. \cite{Ney1987}, Lemma~2.3). Hence, there is a $\sigma
$-finite measure $\pi$ and a strictly positive function $h$ defined
on $[0,1]$ (cf. \cite{Jagers1996}, page~42 and \cite{Nummelin1984},
Theorem~5.1)
such that
%
%e4.10 #&#
\begin{equation}
\label{15} \int_0^1 \hat
\mu_{\alpha^*}(x,dy)\pi(dx) = \pi(dy)
\end{equation}
and
\[
\int_0^1 h(y)\hat\mu_{\alpha^*}(x,dy) =
h(x).
\]
Note that \eqref{15} is just \eqref{13}. It is also straightforward
to show
that $\mu$ is \emph{positive} recurrent (cf. \cite{Jagers1996},
page~43). Since
\[
h(x) = \int_0^1 h(y) \frac{\lambda}{1+\alpha^*}
\kappa(x,y)\,dy \geq\frac{\lambda}{1+\alpha^*}\inf_{x,y} \kappa(x,y) \int
_0^1 h(y)\,dy,
\]
it is clear that $\inf h(x) >0$. This implies that $\pi$ is finite an
can be normed to a probability measure (cf. \cite{Jagers1996}, page~43)
and $h$ can be chosen so that $\int h(x)\pi(dx)=1$. Finally, it is
clear that $\mu$ is nonlattice and that there is $\eps>0$ such that
\[
\sup_x \mu \bigl(x,[0,1]\times[0,\eps] \bigr) < 1.
\]
These conditions are summarised as $\mu$ being \emph{nonlattice and
strictly Malthusian}.

Note that, since $\kappa\leq1$,  $\vert T_t\vert $ can be dominated
by a
unitype branching process where each particle has standard exponential
lifetime and produces offspring at rate $\lambda$. Therefore, for
fixed $t$, $\vert T_t\vert $ is uniformly integrable in the type of the
starting particle. Moreover, the usual ``$x\log x$'' condition follows
easily from the fact that the dominating branching process has finite
variance. Applying \cite{Jagers1996}, Theorem~2, it follows that, for
almost all $x$ and for $A\subset[0,1]$,
%
%e4.11 #&#
\begin{equation}
\label{16} e^{-\alpha^* t} T_t(A) \to\frac{\pi(A)}{\alpha^*\beta} W
\end{equation}
$\IP_x$-almost surely for some nonnegative random variable $W$ that
satisfies $\IE_x W = h(x)$, and for some $\beta$ (which is explicit,
but not of interest here). Note that clearly $\{W>0\}\subset\{\vert
T_t\vert \toinf\}$, but it is not immediate that the two sets are equal.
In order to make statements about \eqref{16} with $e^{-\alpha^* t}$
replaced by $1/\vert T_t\vert $, we need that
%
%e4.12 #&#
\begin{equation}
\label{17} \inf_x \IP_x[W>0] > 0,
\end{equation}
which guarantees that $\{W>0\}=\{\vert T_t\vert \toinf\}$ by \cite{Jagers1996},
Lemma~1.

In order to prove \eqref{17}, note that there must be a set $A$
with $\Vol
(A)>0$ such that $p_A := \inf_{x\in A}\IP_x[W>0] > 0$, for otherwise we
would have $\IP_x[W>0] = 0$ for almost all $x$ which is in
contradiction to $\IE_x W = h(x) > 0$ for almost all $x$. Let $M =
\inf_{x,y} \kappa(x,y)$, which by \eqref{12} is positive, and
\[
E_A = \{\mbox{1st particle has exactly one child of some type $y\in
A$}\}.
\]
Now,
\begin{eqnarray*}
\IP_x[W>0] & \geq&\IP_x[W>0, E_A]
\\
& =& \IP_x[W>0 | E_A] \IP_x[E_A]
\\
& \geq& p_A \int_0^\infty
e^{-t}\lambda M \Vol(A) t e^{-\lambda M \Vol
(A) t} \,dt,
\end{eqnarray*}
which is a positive lower bound independent of $x$. Hence, \eqref{17}
follows. From~\cite{Jagers1996}, Corollary~4, we have for almost
all $x$ that
\[
\frac{T_t(A)}{\vert T_t\vert } \to\pi(A)
\]
%
%\note{added measurable}
$\IP_x[ \cdot| \vert T_t\vert \toinf]$-almost surely for any
measurable $A\subset[0,1]$. Since $\pi$ is finite, it is easy to
extend this to \eqref{14} for bounded $f$.
\end{pf}

%s4.3 #&#
\subsection{A concrete standard kernel}\label{sec4.3}
In this section, we consider a particular standard kernel $\kappa
:[0,1]^2 \rightarrow[0,1]$ given as
%
%e4.13 #&#
\begin{equation}
\label{18} \kappa(x,y) = \cases{ \alpha,& \quad
$\mbox{if $0 \leq x \leq\gamma$ and
$0 \leq y \leq\gamma $,}$\vspace*{2pt}
\cr
\beta,& \quad $\mbox{if $\gamma< x \leq1$ and
$\gamma< y \leq1$,}$ \vspace*{2pt}
\cr
\delta,&\quad $\mbox{otherwise}$,}
\end{equation}
where $0 < \alpha, \gamma,\delta, \beta< 1 $.

One could think of $\kappa$ as a graph between two groups of vertices.
The internal connections between
a primary group A (say) are specified by $\alpha$ and a secondary
group B (specified) by $\beta$. The inter-connections
between the groups of vertices are specified by $\delta$. If we sample
the vertices ergodically with invariant measure $\pi$,
then Theorem~\ref{THM1} specifies that our limit graph will be
governed by
\[
\kappa_{\tau^{-1}} (x,y) = \cases{ \alpha,& \quad $\mbox{if $0 \leq x \leq\tau(
\gamma)$ and $0 \leq y \leq \tau (\gamma)$,}$\vspace*{2pt}
\cr
\beta,& \quad $\mbox{if $
\tau(\gamma) < x \leq1$ and $\tau(\gamma) < y \leq 1$,}$ \vspace*{2pt}
\cr
\delta,& \quad
$\mbox{otherwise,}$ }
\]
where $\tau(x) = \pi([0,x])$; see Figure~\ref{fig1} for a graphical
representation of the distortion in $\kappa$.

%f1 #&#
\begin{figure}

\includegraphics{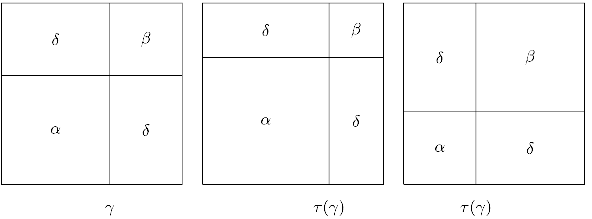}

\caption{Above is a pictorial representation of $\kappa$ (left), $\kappa_{
\tau^{-1}}$
when $\tau(\gamma) > \gamma$ (middle), and $\kappa_{\tau^{-1}}$
when $\tau(\gamma) < \gamma$ (right).}\label{fig1}
\end{figure}

%\def\stk#1#2#3{\begin{tikzpicture}[scale=0.35]
%\def\s{#1}
%\def\sm{#2}
%\coordinate(e1) at (10,0);
%\coordinate(e2) at (0,10);
%\coordinate(u1) at ($(e1)+(e2)$);
%\coordinate(u2) at ($\s*(e1) + \sm*(e2)$);
%\coordinate(u3) at ($\s*(e2) + \sm*(e1)$);
%\draw(0,0) -- (e1) -- ($(e1) + (e2)$) -- (e2) -- (0,0);
%\draw($\s*(e2)$) -- ($\s*(e2) + (e1)$);
%\draw($\s*(e1)$) -- ($\s*(e1) + (e2)$);
%\draw($.5*\s*(u1)$) node {$\alpha$};
%\draw($\s*(u1) +.5*\sm*(u1)$) node {$\beta$};
%\draw($\s*(e2) +.5*(u2)$) node {$\delta$};
%\draw($\s*(e1) +.5*(u3)$) node {$\delta$};
%\draw($\s*(e1)$) node[yshift=-13pt] {#3};
%\end{tikzpicture}
%}
%
%\begin{figure}
%\centering
%\stk{.6}{.4}{$\strut\gamma$}
%\stk{.7}{.3}{$\strut\tau(\gamma)$}
%\stk{.4}}{$\strut\tau(\gamma)$}
%\caption{\label{fig1}
%Above is a pictorial representation of $\kappa$ (left), $\kappa_{
%\tau^{-1}}$
%when $\tau(\gamma) > \gamma$ (middle), and $\kappa_{\tau^{-1}}$
%when $\tau(\gamma) < \gamma$ (right).}
%\end{figure}

We shall now compare $\kappa_{\tau^{-1}}$ in the sampling procedures
discussed in Sections \ref{sec4.1} and \ref{sec4.2}. In
the procedure discussed in Section~\ref{sec4.1}, we have $\pi$ given
by \eqref{9} and a routine calculation gives us that
the value of the distortion, denoted by $\tau_M$, at $\gamma$ is
given by
%
%e4.14 #&#
\begin{equation}
\label{19} \tau_M(\gamma) = {\frac{  ( \alpha\gamma-\delta\gamma
+\delta
 ) \gamma}{
\alpha{\gamma}^{2}-2 {\gamma}^{2}\delta+\beta{\gamma}^{2}+2
\delta
\gamma
-2 \beta\gamma+\beta}.}
\end{equation}
In the procedure discussed in Section~\ref{sec2}, we need to find $\pi
(dx)/dx = \nu(x)$, which satisfies
\[
\nu(x) = \frac{\lambda}{1+\alpha^*} \int_0^1
\kappa(x,y)\nu (y)\,dy, \qquad \int_0^1 \nu(x)\,dx = 1.
\]
In the case of \eqref{18}, this is equivalent to finding the largest
eigenvalue and corresponding eigenvector of a $(2\times2)$-matrix.
A standard calculation then shows that the value of the distortion,
denoted by $\tau_P$, at $\gamma$ is given by
%
%e4.15 #&#
\begin{equation}
\label{20} \tau_P(\gamma) = \frac{(\alpha+ \beta)\gamma-\beta+ s }{2\delta+ \gamma(\alpha
-2\delta+\beta) -\beta+s},
\end{equation}
where
\[
s = \sqrt{{\gamma}^{2} \bigl({(\alpha+\beta)}^{2} -4
\delta^{2} \bigr) +2\gamma \bigl(2 {\delta}^{2} -\alpha
\beta-{\beta}^{2} \bigr)+{\beta}^{2}}.
\]

In general, the formulae \eqref{19} and \eqref{20} do not compare in an
obvious manner with themselves or
with the unbiased sampling [$\tau(\gamma) = \gamma$]. In the
one-referral Markov chain sampling model,
a new vertex is chosen proportional to the values of $\kappa$, with the
proportionality constant being the
volume measure under $\kappa$. In contrast, in the Poisson branching
process model, due to the branching effect, the
offspring of a vertex will be from the regions governed by the
sectional area of $\kappa$ at the vertex. Thus, it
is natural to expect differences in bias between the two procedures. We
illustrate this via
three examples of $\alpha, \beta, \delta$ to illustrate the differences
in distortion between the two sampling procedures.

The first example we consider is when $\alpha=1/5$, $\delta
=1/200$, $\beta=1/5$. In Figure~\ref{fig2}, we plot $\tau(\gamma)$
as a
function of $\gamma$. One can quickly observe that for $\gamma= 0.5$
there is no distortion
in either sampling scheme as expected with $\tau_M(0.5) = \tau_P(0.5) =
0.5$. One observes that when $\gamma<0.5$ then $\tau_P(\gamma) <
\tau_M
(\gamma) < \gamma$ and when $\gamma>0.5$ then $ \gamma< \tau
_M(\gamma)
< \tau_P (\gamma)$. This indicates
that the Poisson branching process model will result in a larger bias
toward the larger group (the secondary group B when $\gamma< 0.5$
and the primary group A when $\gamma>0.5$). This is expected as both
the primary group A and secondary group B are
similarly well connected internally, but a small $\delta$ implies that
they are poorly interconnected.

%f2 #&#
\begin{figure}

\includegraphics{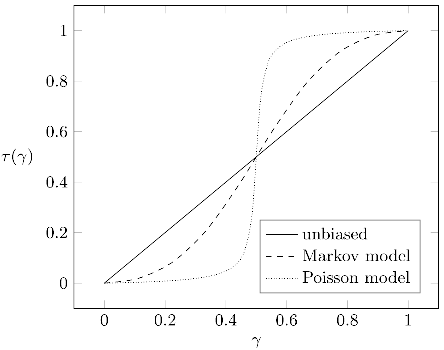}

\caption{Distortion plot for two large groups that are
internally well connected, but where there are not many connections
between the groups ($\alpha=1/5$, $\delta=1/200$, $\beta=1/5$).}\label{fig2}
\end{figure}

%\pgfmathdeclarefunction{fun1}{1}{% as per original MWE
%\pgfmathparse{((\pa*#1-#1*\pd+\pd)*#1)/(\pa*#1^2-2*#1^2*\pd+#1^2*\pg
%+2*#1*(\pd-\pg)+\pg)}%
%}
%
%\pgfmathdeclarefunction{fun2}{1}{% as per original MWE
%\pgfmathparse{((\pa+\pg)*#1-\pg+sqrt(#1^2*(\pa^2+2*\pa*\pg-4*\pd^2+\pg
%^2)+#1*(-2*\pa*\pg+4*\pd^2-2*\pg^2)+\pg^2))/((\pa+\pg)*#1-\pg+2*\pd
%*(1-#1) + sqrt(#1^2*(\pa^2+2*\pa*\pg-4*\pd^2+\pg^2)+#1*(-2*\pa*\pg+4*
%\pd
%^2-2*\pg^2)+\pg^2))}%
%}
%
%\pgfplotsset{every axis legend/.append style={
%legend cell align=left,%
%font=\footnotesize,
%inner xsep=3pt,inner ysep=2pt,nodes={inner sep=2pt},
%anchor=south east,%
%at={(0.95,0.05)}
%}}

%\begin{figure}
%\centering
%\def\pa{0.2} \def\pd{0.005} \def\pg{0.2}
%%
%\begin{tikzpicture}
%%
%\begin{axis}[unbounded coords=discard,y label
%style={rotate=-90},ylabel={$\tau(\gamma)$},xlabel={$\gamma$}]
%\addplot[samples=2,color=black,domain=0:1](\x,\x);
%\addlegendentry{unbiased}
%\addplot[smooth,samples=10, dashed, domain=0:1] (\x,{fun1(\x)});
%\addlegendentry{Markov model}
%\addplot[smooth,samples=40, densely dotted, domain=0:1] (\x,{fun2(
%\x)});
%\addlegendentry{Poisson model}
%\end{axis}
%%
%\end{tikzpicture}
%%
%
%\caption{\label{fig2} Distortion plot for two large groups that are
%internally well connected, but where there are not many connections
%between the groups ($\alpha=1/5$, $\delta=1/200$, $\beta=1/5$). }
%\end{figure}

%f3 #&#
\begin{figure}[b]

\includegraphics{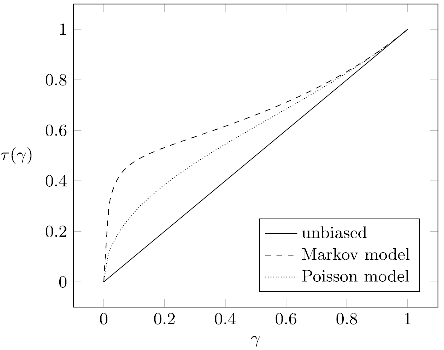}

\caption{Distortion plot for a graph where the primary
group is well connected to itself and to a secondary group, but where
the secondary group is not well connected within itself ($\alpha
=1/5$, $\delta=1/5$, $\beta=1/200$).} \label{fig3}
\end{figure}

In the second example (see Figure~\ref{fig3}), we consider $\alpha
=1/5$, $\delta=1/5$ and $\beta=1/200$. Group A has a fair number of
connections within itself, and there are fair number of connections
between the Groups A and B, but with a small $\beta$, Group B has a
smaller number of connections within itself. Note that for all $0 <
\gamma< 1$, $\gamma< \tau_P(\gamma) < \tau_M(\gamma)$, indicating a
larger bias in the one-referral Markov chain sampling procedure.

%\begin{figure}
%\centering
%\def\pa{0.2} \def\pd{0.2} \def\pg{0.005}
%%
%\begin{tikzpicture}
%%
%\begin{axis}[unbounded coords=discard,y label
%style={rotate=-90},ylabel={$\tau(\gamma)$},xlabel={$\gamma$}]
%\addplot[samples=2,color=black,domain=0:1](\x,\x);
%\addlegendentry{unbiased}
%\addplot[smooth,samples=60, dashed, domain=0:1] (\x,{fun1(\x)});
%\addlegendentry{Markov model}
%\addplot[smooth,samples=70, densely dotted, domain=0:1] (\x,{fun2(
%\x)});
%\addlegendentry{Poisson model}
%\end{axis}
%%
%\end{tikzpicture}
%%
%
%\caption{ \label{fig3} Distortion plot for a graph where the primary
%group is well-connected to itself and to a secondary group, but where
%the secondary group is not well connected within itself ($\alpha
%=1/5$, $\delta=1/5$, $\beta=1/200$). }
%\end{figure}

Finally, we consider the case $\alpha=1/5$, $\delta=1/200$, $\beta
=1/200$ (see Figure~\ref{fig4}). With interconnection probability and
within Group B
connection probabilities being small this time there is a strong bias
toward selecting vertices from the primary Group A.
In plot shown in Figure~\ref{fig4}, we can see that the bias is more
pronounced this time in the Poisson branching process sampling
procedure.

%f4 #&#
\begin{figure}

\includegraphics{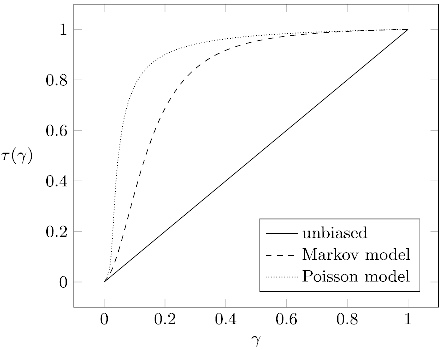}

\caption{Distortion plot for a graph where the primary
group is well connected to itself,
and where the secondary group is neither well connected to the primary
group nor within itself ($\alpha=1/5$, $\delta=1/200$, $\beta=1/200$).}\label{fig4}
\end{figure}

%%
%\begin{figure}
%\centering
%\def\pa{0.2} \def\pd{0.005} \def\pg{0.005}
%%
%\begin{tikzpicture}
%%
%\begin{axis}[unbounded coords=discard,y label
%style={rotate=-90},ylabel={$\tau(\gamma)$},xlabel={$\gamma$}]
%\addplot[samples=2,color=black,domain=0:1](\x,\x);
%\addlegendentry{unbiased}
%\addplot[smooth,samples=50, dashed, domain=0:1] (\x,{fun1(\x)});
%\addlegendentry{Markov model}
%\addplot[smooth,samples=50, densely dotted, domain=0:1] (\x,{fun2(
%\x)});
%\addlegendentry{Poisson model}
%\end{axis}
%%
%\end{tikzpicture}
%%
%iotk
%\caption{\label{fig4}Distortion plot for a graph where the primary
%group is well-connected to itself,
%and where the secondary group is neither well-connected to the primary
%group nor within itself ($\alpha=1/5$, $\delta=1/200$, $\beta=1/200$).
%}
%
%\end{figure}

In conclusion, depending on the size, connectedness of the groups and
interconnections between them,
the sampling scheme has to be chosen appropriately to control the bias.

\section*{Acknowledgements}

We thank Andrew Barbour and Don Dawson for helpful discussions, and
we are grateful to the anonymous referee for making
suggestions that resulted in improving the paper and for pointing
out an important error in the proof of the main theorem; in particular, this
resulted in the introduction of
assumption (ii) in Theorem~\ref{THM1}.

Parts of this work was done
when S. Athreya visited the Institute of Mathematical Sciences, NUS. The
authors would like to express their gratitude for the kind
hospitality.

%\def\bibfont{\small}

%\bibliographystyle{mynatbib}
%\bibliography{literature}

\begin{thebibliography}{23}
% pybtex-1.44. Style name=ims, version=2.92, label_style=nolabel,
%sorting_style=complex, cfg=None, language=None.

%b1 ###
%b1 #&#
\bibitem{Aaronson1996}
%
\begin{barticle}[mr]
\bauthor{\bsnm{Aaronson},~\bfnm{J.}\binits{J.}},
\bauthor{\bsnm{Burton},~\bfnm{R.}\binits{R.}},
\bauthor{\bsnm{Dehling},~\bfnm{H.}\binits{H.}},
\bauthor{\bsnm{Gilat},~\bfnm{D.}\binits{D.}},
\bauthor{\bsnm{Hill},~\bfnm{T.}\binits{T.}} \AND
\bauthor{\bsnm{Weiss},~\bfnm{B.}\binits{B.}}
(\byear{1996}).
\btitle{Strong laws for {$L$}- and {$U$}-statistics}.
\bjournal{Trans. Amer. Math. Soc.}
\bvolume{348}
\bpages{2845--2866}.
\bid{doi={10.1090/S0002-9947-96-01681-9}, issn={0002-9947}, mr={1363941}}
\end{barticle}
%
\iffalse\OrigBibText
J.~Aaronson, R.~Burton, H.~Dehling, D.~Gilat, T.~Hill and B.~Weiss (1996).
\newblock Strong laws for {$L$}- and {$U$}-statistics.
\newblock\emph{Trans. Amer. Math. Soc.} \textbf{348}, \penalty0 2845--2866.
\endOrigBibText\fi
\bptok{imsref}%
% NOT OUTPUTTED:
% number = 7
% doi = http://dx.doi.org/10.1090/S0002-9947-96-01681-9
% coden = TAMTAM
% fjournal = Transactions of the American Mathematical Society
\endbibitem

%b2 ###
%b2 #&#
\bibitem{Aldous1981}
%
\begin{barticle}[mr]
\bauthor{\bsnm{Aldous},~\bfnm{David~J.}\binits{D.~J.}}
(\byear{1981}).
\btitle{Representations for partially exchangeable arrays of random variables}.
\bjournal{J. Multivariate Anal.}
\bvolume{11}
\bpages{581--598}.
\bid{doi={10.1016/0047-259X(81)90099-3}, issn={0047-259X}, mr={0637937}}
\end{barticle}
%
\iffalse\OrigBibText
D.~J. Aldous (1981).
\newblock Representations for partially exchangeable arrays of random
variables.
\newblock\emph{J. Multivariate Anal.} \textbf{11}, \penalty0 581--598.
\endOrigBibText\fi
\bptok{imsref}%
% NOT OUTPUTTED:
% number = 4
% doi = http://dx.doi.org/10.1016/0047-259X(81)90099-3
% coden = JMVAAI
% fjournal = Journal of Multivariate Analysis
\endbibitem

%b3 ###
%b3 #&#
\bibitem{Bollobas2009}
%
\begin{bincollection}[mr]
\bauthor{\bsnm{Bollob{\'a}s},~\bfnm{B{\'e}la}\binits{B.}} \AND
\bauthor{\bsnm{Riordan},~\bfnm{Oliver}\binits{O.}}
(\byear{2009}).
\btitle{Metrics for sparse graphs}.
In \bbooktitle{Surveys in Combinatorics 2009}.
\bseries{London Mathematical Society Lecture Note Series}
\bvolume{365}
\bpages{211--287}.
\bpublisher{Cambridge Univ. Press},
\blocation{Cambridge}.
\bid{mr={2588543}}
\end{bincollection}
%
\iffalse\OrigBibText
B.~Bollob{\'a}s and O.~Riordan (2009).
\newblock Metrics for sparse graphs.
\newblock In \emph{Surveys in combinatorics 2009}, volume 365 of \emph{London
Math. Soc. Lecture Note Ser.}, pages 211--287. Cambridge Univ. Press,
Cambridge.
\endOrigBibText\fi
\bptok{imsref}%
\endbibitem

%b4 ###
%b4 #&#
\bibitem{Borgs2008}
%
\begin{barticle}[mr]
\bauthor{\bsnm{Borgs},~\bfnm{C.}\binits{C.}},
\bauthor{\bsnm{Chayes},~\bfnm{J.~T.}\binits{J.~T.}},
\bauthor{\bsnm{Lov{\'a}sz},~\bfnm{L.}\binits{L.}},
\bauthor{\bsnm{S{\'o}s},~\bfnm{V.~T.}\binits{V.~T.}} \AND
\bauthor{\bsnm{Vesztergombi},~\bfnm{K.}\binits{K.}}
(\byear{2008}).
\btitle{Convergent sequences of dense graphs. I. {S}ubgraph
frequencies, metric properties and testing}.
\bjournal{Adv. Math.}
\bvolume{219}
\bpages{1801--1851}.
\bid{doi={10.1016/j.aim.2008.07.008}, issn={0001-8708}, mr={2455626}}
\end{barticle}
%
\iffalse\OrigBibText
C.~Borgs, J.~Chayes, L.~Lov{\'a}sz, V.~S{\'o}s and K.~Vesztergombi (2008).
\newblock{Convergent sequences of dense graphs I: Subgraph
frequencies, metric
properties and testing}.
\newblock\emph{Advances in Mathematics} \textbf{219}, \penalty0 1801--1851.
\endOrigBibText\fi
\bptok{imsref}%
% NOT OUTPUTTED:
% number = 6
% doi = http://dx.doi.org/10.1016/j.aim.2008.07.008
% coden = ADMTA4
% fjournal = Advances in Mathematics
\endbibitem

%b5 ###
%b5 #&#
\bibitem{Borgs2012}
%
\begin{barticle}[mr]
\bauthor{\bsnm{Borgs},~\bfnm{C.}\binits{C.}},
\bauthor{\bsnm{Chayes},~\bfnm{J.~T.}\binits{J.~T.}},
\bauthor{\bsnm{Lov{\'a}sz},~\bfnm{L.}\binits{L.}},
\bauthor{\bsnm{S{\'o}s},~\bfnm{V.~T.}\binits{V.~T.}} \AND
\bauthor{\bsnm{Vesztergombi},~\bfnm{K.}\binits{K.}}
(\byear{2012}).
\btitle{Convergent sequences of dense graphs II. {M}ultiway cuts and
statistical physics}.
\bjournal{Ann. of Math. (2)}
\bvolume{176}
\bpages{151--219}.
\bid{doi={10.4007/annals.2012.176.1.2}, issn={0003-486X}, mr={2925382}}
\end{barticle}
%
\iffalse\OrigBibText
C.~Borgs, J.~Chayes, L.~Lov{\'a}sz, V.~S{\'o}s and K.~Vesztergombi (2012).
\newblock{Convergent sequences of dense graphs II: Multiway cuts and
statistical physics}.
\newblock\emph{Ann. Math.} \textbf{176}, \penalty0 151--219.
\endOrigBibText\fi
\bptok{imsref}%
% NOT OUTPUTTED:
% number = 1
% doi = http://dx.doi.org/10.4007/annals.2012.176.1.2
% coden = ANMAAH
% fjournal = Annals of Mathematics. Second Series
\endbibitem

%b6 ###
%b6 #&#
\bibitem{Diaconis2008}
%
\begin{barticle}[mr]
\bauthor{\bsnm{Diaconis},~\bfnm{Persi}\binits{P.}} \AND
\bauthor{\bsnm{Janson},~\bfnm{Svante}\binits{S.}}
(\byear{2008}).
\btitle{Graph limits and exchangeable random graphs}.
\bjournal{Rend. Mat. Appl. (7)}
\bvolume{28}
\bpages{33--61}.
\bid{issn={1120-7183}, mr={2463439}}
\end{barticle}
%
\iffalse\OrigBibText
P.~Diaconis and S.~Janson (2008).
\newblock Graph limits and exchangeable random graphs.
\newblock\emph{Rend. Mat. Appl. (7)} \textbf{28}, \penalty0 33--61.
\endOrigBibText\fi
\bptok{imsref}%
% NOT OUTPUTTED:
% number = 1
% coden = RNMTAN
% fjournal = Rendiconti di Matematica e delle sue Applicazioni. Serie
%VII
\endbibitem

%b7 ###
%b7 #&#
\bibitem{Gile2010}
%
\begin{barticle}[auto:parserefs-M02]
\bauthor{\bsnm{Gile},~\bfnm{K.~J.}\binits{K.~J.}} \AND
\bauthor{\bsnm{Handcock},~\bfnm{M.~S.}\binits{M.~S.}}
(\byear{2010}).
\btitle{Respondent-driven sampling: An assessment of current methodology}.
\bjournal{Sociological Methodology}
\bvolume{40}
\bpages{285--327}.
\end{barticle}
%
\iffalse\OrigBibText
K.~J. Gile and M.~S. Handcock (2010).
\newblock Respondent-driven sampling: an assessment of current methodology.
\newblock\emph{Sociological Methodology} \textbf{40}, \penalty0 285--327.
\endOrigBibText\fi
\bptok{imsref}%
\endbibitem

%b8 ###
%b8 #&#
\bibitem{Goel2010}
%
\begin{barticle}[auto:parserefs-M02]
\bauthor{\bsnm{Goel},~\bfnm{S.}\binits{S.}} \AND
\bauthor{\bsnm{Salganik},~\bfnm{M.~J.}\binits{M.~J.}}
(\byear{2010}).
\btitle{Assessing respondent-driven sampling}.
\bjournal{Proc. Natl. Acad. Sci. USA}
\bvolume{107}
\bpages{6743--6747}.
\end{barticle}
%
\iffalse\OrigBibText
S.~Goel and M.~J. Salganik (2010).
\newblock Assessing respondent-driven sampling.
\newblock\emph{PNAS} \textbf{107}, \penalty0 6743--6747.
\endOrigBibText\fi
\bptok{imsref}%
\endbibitem

%b9 ###
%b9 #&#
\bibitem{Heckathorn1997}
%
\begin{barticle}[auto:parserefs-M02]
\bauthor{\bsnm{Heckathorn},~\bfnm{D.~D.}\binits{D.~D.}}
(\byear{1997}).
\btitle{Respondent-driven sampling: A new approach to the study of
hidden populations}.
\bjournal{Social Problems}
\bvolume{44}
\bpages{174--199}.
\end{barticle}
%
\iffalse\OrigBibText
D.~D. Heckathorn (1997).
\newblock Respondent-driven sampling: A new approach to the study of hidden
populations.
\newblock\emph{Social Problems} \textbf{44}, \penalty0 pp. 174--199.
\endOrigBibText\fi
\bptok{imsref}%
\endbibitem

%b10 ###
%b10 #&#
\bibitem{Heckathorn2002}
%
\begin{barticle}[auto:parserefs-M02]
\bauthor{\bsnm{Heckathorn},~\bfnm{D.~D.}\binits{D.~D.}}
(\byear{2002}).
\btitle{Respondent-driven sampling II: Deriving valid population
estimates from chain-referral samples of hidden populations}.
\bjournal{Social Problems}
\bvolume{49}
\bpages{11--34}.
\end{barticle}
%
\iffalse\OrigBibText
D.~D. Heckathorn (2002).
\newblock Respondent-driven sampling ii: Deriving valid population estimates
from chain-referral samples of hidden populations.
\newblock\emph{Social Problems} \textbf{49}, \penalty0 pp. 11--34.
\endOrigBibText\fi
\bptok{imsref}%
\endbibitem

%b11 ###
%b11 #&#
\bibitem{HerandezLerma1998}
%
\begin{barticle}[mr]
\bauthor{\bsnm{Hern{\'a}ndez-Lerma},~\bfnm{On{\'e}simo}\binits{O.}}
\AND
\bauthor{\bsnm{Lasserre},~\bfnm{Jean~B.}\binits{J.~B.}}
(\byear{1998}).
\btitle{Ergodic theorems and ergodic decomposition for {M}arkov chains}.
\bjournal{Acta Appl. Math.}
\bvolume{54}
\bpages{99--119}.
\bid{doi={10.1023/A:1006012022894}, issn={0167-8019}, mr={1656784}}
\end{barticle}
%
\iffalse\OrigBibText
O.~Her{\'a}ndez-Lerma and J.~B. Lasserre (1998).
\newblock Ergodic theorems and ergodic decomposition for markov chains.
\newblock\emph{Acta Applicandae Mathematicae} \textbf{54}, \penalty
0 99--119.
\endOrigBibText\fi
\bptok{imsref}%
% NOT OUTPUTTED:
% number = 1
% doi = http://dx.doi.org/10.1023/A:1006012022894
% coden = AAMADV
% fjournal = Acta Applicandae Mathematicae. An International Survey
%Journal on Applying Mathematics and Mathematical Applications
\endbibitem

%b12 ###
%b12 #&#
\bibitem{Hoover1979}
%
\begin{bmisc}[auto:parserefs-M02]
\bauthor{\bsnm{Hoover},~\bfnm{D.~N.}\binits{D.~N.}}
(\byear{1979}).
\bhowpublished{Relations on probability spaces and arrays of random variables.
Preprint, Institute for Advanced Studies.
Princeton, NJ}.
%\bnote{Preprint}.
\end{bmisc}
%
\iffalse\OrigBibText
D.~N. Hoover (1979).
\newblock Relations on probability spaces and arrays of random variables.
\newblock\emph{Preprint, Institute for Advanced Study, Princeton, NJ}
\textbf{2}.
\endOrigBibText\fi
\bptok{imsref}%
\endbibitem

%b13 ###
%b13 #&#
\bibitem{Jagers1989}
%
\begin{barticle}[mr]
\bauthor{\bsnm{Jagers},~\bfnm{Peter}\binits{P.}}
(\byear{1989}).
\btitle{General branching processes as {M}arkov fields}.
\bjournal{Stochastic Process. Appl.}
\bvolume{32}
\bpages{183--212}.
\bid{doi={10.1016/0304-4149(89)90075-6}, issn={0304-4149}, mr={1014449}}
\end{barticle}
%
\iffalse\OrigBibText
P.~Jagers (1989).
\newblock General branching processes as {M}arkov fields.
\newblock\emph{Stochastic Process. Appl.} \textbf{32}, \penalty0 183--212.
\endOrigBibText\fi
\bptok{imsref}%
% NOT OUTPUTTED:
% number = 2
% doi = http://dx.doi.org/10.1016/0304-4149(89)90075-6
% coden = STOPB7
% fjournal = Stochastic Processes and their Applications
\endbibitem

%b14 ###
%b14 #&#
\bibitem{Jagers1996}
%
\begin{bincollection}[mr]
\bauthor{\bsnm{Jagers},~\bfnm{Peter}\binits{P.}} \AND
\bauthor{\bsnm{Nerman},~\bfnm{Olle}\binits{O.}}
(\byear{1996}).
\btitle{The asymptotic composition of supercritical multi-type
branching populations}.
In \bbooktitle{S\'eminaire de {P}robabilit\'es, {XXX}}.
\bseries{Lecture Notes in Math.}
\bvolume{1626}
\bpages{40--54}.
\bpublisher{Springer},
\blocation{Berlin}.
\bid{doi={10.1007/BFb0094640}, mr={1459475}}
\end{bincollection}
%
\iffalse\OrigBibText
P.~Jagers and O.~Nerman (1996).
\newblock The asymptotic composition of supercritical multi-type branching
populations.
\newblock In \emph{S\'eminaire de {P}robabilit\'es, {XXX}}, volume
1626 of
\emph{Lecture Notes in Math.}, pages 40--54. Springer, Berlin.
\endOrigBibText\fi
\bptok{imsref}%
% NOT OUTPUTTED:
% doi = http://dx.doi.org/10.1007/BFb0094640
\endbibitem

%b15 ###
%b15 #&#
\bibitem{Janson2008a}
%
\begin{btechreport}[auto:parserefs-M02]
\bauthor{\bsnm{Janson},~\bfnm{S.}\binits{S.}}
(\byear{2008}).
\btitle{Connectedness in graph limits}.
\btype{Technical Report 2008:8},
\blocation{Uppsala}.
\end{btechreport}
%
\iffalse\OrigBibText
S.~Janson (2008).
\newblock Connectedness in graph limits.
\newblock Technical Report 2008:8, Uppsala.
\endOrigBibText\fi
\bptok{imsref}%
\endbibitem

%b16 ###
%b16 #&#
\bibitem{Lovasz2012}
%
\begin{bbook}[mr]
\bauthor{\bsnm{Lov{\'a}sz},~\bfnm{L{\'a}szl{\'o}}\binits{L.}}
(\byear{2012}).
\btitle{Large Networks and Graph Limits}.
\bseries{American Mathematical Society Colloquium Publications}
\bvolume{60}.
\bpublisher{Amer. Math. Soc.},
\blocation{Providence, RI}.
\bid{mr={3012035}}
\end{bbook}
%
\iffalse\OrigBibText
L.~Lov{\'a}sz (2012).
\newblock\emph{Large Networks and Graph Limits}.
\newblock American Mathematical Society.
\endOrigBibText\fi
\bptok{imsref}%
% NOT OUTPUTTED:
% isbn = 978-0-8218-9085-1
% fpage = xiv+475
\endbibitem

%b17 ###
%b17 #&#
\bibitem{Lovasz2006}
%
\begin{barticle}[mr]
\bauthor{\bsnm{Lov{\'a}sz},~\bfnm{L{\'a}szl{\'o}}\binits{L.}} \AND
\bauthor{\bsnm{Szegedy},~\bfnm{Bal{\'a}zs}\binits{B.}}
(\byear{2006}).
\btitle{Limits of dense graph sequences}.
\bjournal{J. Combin. Theory Ser. B}
\bvolume{96}
\bpages{933--957}.
\bid{doi={10.1016/j.jctb.2006.05.002}, issn={0095-8956}, mr={2274085}}
\end{barticle}
%
\iffalse\OrigBibText
L.~Lov{\'a}sz and B.~Szegedy (2006).
\newblock Limits of dense graph sequences.
\newblock\emph{J. Combin. Theory Ser. B} \textbf{96}, \penalty0 933--957.
\endOrigBibText\fi
\bptok{imsref}%
% NOT OUTPUTTED:
% number = 6
% doi = http://dx.doi.org/10.1016/j.jctb.2006.05.002
% coden = JCBTB8
% fjournal = Journal of Combinatorial Theory. Series B
\endbibitem

%b18 ###
%b18 #&#
\bibitem{McDiarmid1989}
%
\begin{bincollection}[mr]
\bauthor{\bsnm{McDiarmid},~\bfnm{Colin}\binits{C.}}
(\byear{1989}).
\btitle{On the method of bounded differences}.
In \bbooktitle{Surveys in Combinatorics, 1989 ({N}orwich, 1989)}.
\bseries{London Mathematical Society Lecture Note Series}
\bvolume{141}
\bpages{148--188}.
\bpublisher{Cambridge Univ. Press},
\blocation{Cambridge}.
\bid{mr={1036755}}
\end{bincollection}
%
\iffalse\OrigBibText
C.~McDiarmid (August 1989).
\newblock On the method of bounded differences.
\newblock In \emph{Surveys in Combinatorics}, number 141 in London Mathematical
Society Lecture Note Series, pages 148--188. Cambridge University Press.
\endOrigBibText\fi
\bptok{imsref}%
\endbibitem

%b19 ###
%b19 #&#
\bibitem{Ney1987}
%
\begin{barticle}[mr]
\bauthor{\bsnm{Ney},~\bfnm{P.}\binits{P.}} \AND
\bauthor{\bsnm{Nummelin},~\bfnm{E.}\binits{E.}}
(\byear{1987}).
\btitle{Markov additive processes. I. {E}igenvalue properties and
limit theorems}.
\bjournal{Ann. Probab.}
\bvolume{15}
\bpages{561--592}.
\bid{issn={0091-1798}, mr={0885131}}
\end{barticle}
%
\iffalse\OrigBibText
P.~Ney and E.~Nummelin (1987).
\newblock Markov additive processes. {I}. {E}igenvalue properties and limit
theorems.
\newblock\emph{Ann. Probab.} \textbf{15}, \penalty0 561--592.
\endOrigBibText\fi
\bptok{imsref}%
% NOT OUTPUTTED:
% url =
%%http://links.jstor.org/sici?sici=0091-1798(198704)15:2<561:MAPIEP>2.0.CO;2-J&origin=MSN
% number = 2
% coden = APBYAE
% fjournal = The Annals of Probability
\endbibitem

%b20 ###
%b20 #&#
\bibitem{Niemi1986}
%
\begin{barticle}[mr]
\bauthor{\bsnm{Niemi},~\bfnm{S.}\binits{S.}} \AND
\bauthor{\bsnm{Nummelin},~\bfnm{E.}\binits{E.}}
(\byear{1986}).
\btitle{On nonsingular renewal kernels with an application to a
semigroup of transition kernels}.
\bjournal{Stochastic Process. Appl.}
\bvolume{22}
\bpages{177--202}.
\bid{doi={10.1016/0304-4149(86)90001-3}, issn={0304-4149}, mr={0860932}}
\end{barticle}
%
\iffalse\OrigBibText
S.~Niemi and E.~Nummelin (1986).
\newblock On nonsingular renewal kernels with an application to a
semigroup of
transition kernels.
\newblock\emph{Stochastic Process. Appl.} \textbf{22}, \penalty0 177--202.
\endOrigBibText\fi
\bptok{imsref}%
% NOT OUTPUTTED:
% number = 2
% doi = http://dx.doi.org/10.1016/0304-4149(86)90001-3
% coden = STOPB7
% fjournal = Stochastic Processes and their Applications
\endbibitem

%b21 ###
%b21 #&#
\bibitem{Nummelin1984}
%
\begin{bbook}[mr]
\bauthor{\bsnm{Nummelin},~\bfnm{Esa}\binits{E.}}
(\byear{1984}).
\btitle{General Irreducible {M}arkov Chains and Nonnegative Operators}.
\bseries{Cambridge Tracts in Mathematics}
\bvolume{83}.
\bpublisher{Cambridge Univ. Press},
\blocation{Cambridge}.
\bid{doi={10.1017/CBO9780511526237}, mr={0776608}}
\end{bbook}
%
\iffalse\OrigBibText
E.~Nummelin (1984).
\newblock\emph{General irreducible {M}arkov chains and nonnegative operators},
volume~83 of \emph{Cambridge Tracts in Mathematics}.
\newblock Cambridge University Press, Cambridge.
\endOrigBibText\fi
\bptok{imsref}%
% NOT OUTPUTTED:
% doi = http://dx.doi.org/10.1017/CBO9780511526237
% isbn = 0-521-25005-6
% fpage = xi+156
\endbibitem

%b22 ###
%b22 #&#
\bibitem{Volz2008}
%
\begin{barticle}[auto:parserefs-M02]
\bauthor{\bsnm{Volz},~\bfnm{E.}\binits{E.}} \AND
\bauthor{\bsnm{Heckathorn},~\bfnm{D.~D.}\binits{D.~D.}}
(\byear{2008}).
\btitle{Probability based estimation theory for respondent driven sampling}.
\bjournal{Journal of Official Statistics}
\bvolume{24}
\bpages{79--97}.
\end{barticle}
%
\iffalse\OrigBibText
E.~Volz and D.~D. Heckathorn (2008).
\newblock Probability based estimation theory for respondent driven sampling.
\newblock\emph{Journal of Official Statistics} \textbf{24}, \penalty
0 79--97.
\endOrigBibText\fi
\bptok{imsref}%
\endbibitem

%b23 ###
%b23 #&#
\bibitem{Yosida1980}
%
\begin{bbook}[mr]
\bauthor{\bsnm{Yosida},~\bfnm{K{\^o}saku}\binits{K.}}
(\byear{1980}).
\btitle{Functional Analysis},
\bedition{6th} ed.
\bseries{Grundlehren der Mathematischen Wissenschaften}
\bvolume{123}.
\bpublisher{Springer},
\blocation{Berlin}.
\bid{mr={0617913}}
\end{bbook}
%
\iffalse\OrigBibText
K.~Yosida (1980).
\newblock\emph{Functional analysis}.
\newblock Classics in Mathematics. Springer-Verlag, Berlin.
\newblock ISBN 3-540-58654-7.
\endOrigBibText\fi
\bptok{imsref}%
% NOT OUTPUTTED:
% isbn = 3-540-10210-8
% fpage = xii+501
\endbibitem
\end{thebibliography}

% imsref loaded by akundreckaite, 2015-10-12 12:44:50
%

%\begin{appendix}
%\section{}
%\end{appendix}

% zodis "Acknowledgments" paliekamas pagal autoriu
%\section*{Acknowledgments}

%\begin{supplement}[id=suppA]
%\sname{Supplement A}
%\stitle{}
%\slink[doi]{10.1214/00-AAPXXXXSUPP} %[doi,text={...}] - jei reikia
%suskaldyti doi
%\sdatatype{.pdf}
%\sfilename{aapXXXX\_supp.pdf}
%\sdescription{}
%\end{supplement}

%\begin{thebibliography}{99}
%\bibitem{r1}
%\bibitem{r1}
%\end{thebibliography}

\printaddresses
\end{document}